\documentclass{amsart}
\usepackage[left=2.5cm,right=2.5cm, top=2.5cm, bottom=2.4cm]{geometry}
\usepackage{amsmath,amsfonts,amssymb}
\usepackage{url, hyperref}
\usepackage{color,soul} % text formatting while editing the draft
\usepackage[normalem]{ulem} % text formatting while editing the draft
\usepackage{enumerate,graphics, graphicx}
\usepackage{subfig}

\newtheorem{theorem}{Theorem}[section]

\newtheorem{prop}[theorem]{Proposition}

\newtheorem{cor}[theorem]{Corollary}
\theoremstyle{definition}
\newtheorem{defn}[theorem]{Definition}
\newtheorem{eg}[theorem]{Example}
\newtheorem{problem}[theorem]{Problem}

\newtheorem{rmk}[theorem]{Remark}
\newtheorem{notation}[theorem]{Notation}

\newcommand{\B}{\mathcal B}

\newcommand{\disc}[1]{\textrm{disc} #1}

\parindent0cm
\frenchspacing
\parskip=1ex plus .25ex minus .25ex

\begin{document}
\title[Toric algebra of hypergraphs]{Toric algebra of hypergraphs}

\author{Sonja Petrovi\'c}
\address{Sonja Petrovi\'c is with the Department of Statistics, The Pennsylvania State University, University Park PA 16802}
\email{\url{petrovic@psu.edu}}

\author{Despina Stasi}
\address{Despina Stasi was with the Department of Mathematics, Statistics and Computer Science, University of Illinois at Chicago, Chicago IL 60607; and is currently at the Department of Statistics, The Pennsylvania State University, University Park PA 16802}
\email{\url{despina@psu.edu}}

\date{\today}

%%%%%%%%%%%%%%%%%%%%%%%%%%%%
%%%%%%%%%%%%%%%%%%%%%%%%%%%%

\begin{abstract}
The edges of any hypergraph parametrize a monomial algebra called the edge subring of the hypergraph. We study presentation ideals of these edge subrings, and describe their generators in terms of balanced walks on hypergraphs. Our results generalize those for the defining ideals of edge subrings of graphs, which are well-known in the commutative algebra community, and popular in the algebraic statistics community.
One of the motivations for studying toric ideals of  hypergraphs comes from algebraic  statistics, where generators of the toric ideal give a basis for random walks on fibers of the statistical model specified by the hypergraph. Further, understanding the structure  of the generators gives insight into the model geometry.
\end{abstract}

\maketitle

%%%%%%%%%%%%%%%%%%%%%%%%%%%%
%%%%%%%%%%%%%%%%%%%%%%%%%%%%

\section{Introduction}

The rich history of the ideal theory of graphs, dating back to the seminal paper \cite{SVV94}, shows that fundamental properties of monomial algebras associated to graphs have interesting combinatorial interpretations. The connection inspired a body of work that joins combinatorial and algebraic methods with at least two algebraic objects in focus: monomial edge ideals of graphs, and toric ideals of edge subrings of graphs.

The study of quadratic squarefree monomial ideals corresponding to edges of a graph  has generated a lot of interest from the algebraic community. In particular, the comprehensive monograph \cite{Vill} contains numerous references on the subject.
More recent work includes, for example, \cite{CorsoNagel09}  who describe Betti numbers and primary decompositions for edge ideals of restricted families of graphs.
On the other hand, presentation ideals of edge subrings of graphs, which are toric rather then monomial, arise as kernels of monomial maps whose images correspond to the edges of a graph.  These toric ideals have been studied in \cite{Vill95}, \cite{OH-quad, OH-multipartite},  \cite{Vill}, \cite{RTT12}.

Several results for the monomial edge ideals have been extended to the case of hypergraphs (see, for example, \cite{HvT08},  \cite{Vill-clutters}), where a hypergraph is a generalization of a graph with edges containing more than two vertices. However, toric ideals of edge subrings remain completely unexplored beyond the graph case.
Nevertheless, they are of interest in applications, and as blow-up algebras, since the edge subring of a uniform hypergraph is isomorphic to the special fiber ring of its edge ideal.

In this work, we extend the theory for these toric ideals from graphs to (uniform) hypergraphs. We study the combinatorial signatures of presentation ideals of monomial subalgebras parametrized by squarefree monomials of degree greater than $2$. Their generators, Gr\"obner and Graver bases are given in terms of monomial walks on hypergraphs,  generalizing the constructions that are well-known for graphs.

One of the motivations for studying toric ideals of  hypergraphs comes from algebraic  statistics, where generating sets of toric ideals play an important role in testing how well a model fits the given data. The edge subring of a hypergraph corresponds to any exponential family model whose joint probabilities are parametrized by monomials. For more details, see Section~\ref{sec:relevance}.
In fact, edge subrings of graphs are already known in algebraic statistics, most notably in \cite{Morton10}, \cite{OHT:11}, \cite{StWe12}, \cite{PRF:09}.
The first author has used  these toric ideals to gain insight into the geometry of a particular random graph model, existence of maximum likelihood estimators, and a better way to generate Markov moves \cite{PRF:09, RPF:11}.
In particular, this work is a starting point for developing the theory required for the more general algebraic statistical models.

\bigskip
A hypergraph $H$ is \emph{$d$-uniform} if every edge contains $d$ vertices.
We describe the natural  one-to-one correspondence between sets of squarefree monomials of degree $d$ and $d$-uniform hypergraphs with the following notation.
For a finite $d$-uniform hypergraph $H$ on $V=\{x_1,\dots,x_n\}$ and edge set $E$, each edge $e_i$ of $H$ encodes a squarefree monomial
$
	x^{e_i}:=\prod_{j\in e_i} x_j
$
of degree $d$ in the polynomial ring $k[x_1,\dots,x_n]$ over a field $k$.  Thus, the hypergraph $H$ can be written as a set of monomials representing the edges:
$
 	H = \{x^{e_1}, \dots, x^{e_k}\}.
$
The \emph{edge subring} of the hypergraph $H$, denoted by $k[H]$, is the monomial subring of the polynomial ring generated by the edges of $H$; in symbols,
\[
	k[H] := k[x^{e_i} : e_i \in E(H)].
\]
Letting $t_{e_i}$ be a {variable} representing the edge $e_i$, define a ring homomorphism
$	\phi_H : k[t_{e_i}]  \to k[H]$ with
$ \phi_H(t_{e_i})=x^{e_i}$.
The kernel of this monomial map, denoted by $I_H$, is the \emph{toric ideal of the edge subring of the hypergraph $H$}. It encodes the algebraic relations among the edges of the hypergraph.

\bigskip

A first problem of interest is to describe the combinatorics of generators of the ideal for an arbitrary hypergraph. With this in mind, we define  \emph{monomial walks} on hypergraphs and the balancing condition in Section~\ref{sec:walks}, which are the natural direct generalization of monomial walks on graphs. 
 Theorem~\ref{thm:balancedEdgesEquivalence} characterizes the binomials in $I_H$ in terms of such monomial walks, extending the classical theorems of Villarreal \cite{Vill95,Vill}, and Ohsugi and Hibi \cite{OH-quad}. 
In Section~\ref{sec:relevance} we outline the relevance of our results: the correspondence between statistical models and hypergraphs is explored in Section~\ref{subsec:statistics}, and we derive a connection to the well-studied set-theoretical problem of combinatorial discrepancy in Section~\ref{subsec:discrepancy}. 
Given the apparent difficulty of the problem of describing \emph{all} primitive monomial walks on an arbitrary hypergraph, we dedicate Section~\ref{sec:sparseMonomialHypergraphs}  to sparsely intersecting hypergraphs, which generalize the three types of primitive walks on graphs. 
In particular, Propositions~\ref{prop:matchings}, \ref{prop:sunflowers}, \ref{prop:partitionedCoreSunflowers} and Theorem~\ref{thm:sunflowersRelaxedCore}  describe several supporting hypergraphs of primitive monomial walks. Their basic building blocks consist of  matchings and sunflowers, well-known in the hypergraph literature.  
In Section~\ref{sec:ctrexamples} we  show that many of the results for graphs cannot be generalized, and the expected degree bounds do not hold.
Finally, Section~\ref{sec:problems} outlines some of the many open problems that we hope will be addressed by the combinatorics community.

Algebraic properties of  toric ideals of hypergraphs such as normality, Cohen-Macaulayness, and geometry of the corresponding polytopes are all open problems of interest, but are beyond the scope of this paper. Degree bounds of generators are studied in  \cite{GrPe}. The problem of finding these generators algorithmically and understanding the complexity of such algorithms remains open and would make a significant contribution to applied algebraic statistics.

%%%%%%%%%%%%%%%%%%%%%%%%%%%%
%%%%%%%%%%%%%%%%%%%%%%%%%%%%

\section{Monomial Walks on Hypergraphs}
\label{sec:walks}

We begin with an example in the case when $H$ is $2$-uniform.
\begin{eg} \label{eg:k22}
	Consider the simplest case when $H$ is a graph, say, the complete graph $H=K_5$ on vertices $x_1,\dots,x_5$.
	The edge subring $k[K_5]\subseteq k[x_1,\dots,x_5]$ is parametrized by
	\begin{align*}
		\phi_{K_5} : k[t_{ij}]  &\to k[x_1x_2, x_1x_3, \dots, x_4x_5]\\
		t_{ij}	&\mapsto  x_ix_j.
	\end{align*}
	The toric ideal $I_{K_5}$ is generated by the following 10 quadrics:
	\begin{align*}
		t_{24}t_{35}-t_{23}t_{45}  , \quad
 t_{14}t_{35}-t_{13}t_{45}  ,   \quad
 t_{34}t_{25}-t_{23}t_{45}   ,  \quad
 t_{14}t_{25}-t_{12}t_{45}    ,     \quad
t_{13}t_{25}-t_{12}t_{35}      ,  \\
 t_{34}t_{15}-t_{13}t_{45}      ,    \quad
t_{24}t_{15}-t_{12}t_{45}        ,      \quad
 t_{23}t_{15}-t_{12}t_{35}        ,       \quad
 t_{13}t_{24}-t_{12}t_{34}         ,      \quad
 t_{23}t_{14}-t_{12}t_{34}  .  
	\end{align*}
	For example, the binomial $f:=t_{13}t_{24} - t_{12}t_{34}  \in I_H$ is represented in the leftmost graph of Figure~\ref{fig:3walksOnGraphs}.
	Since $f$ is represented by an even cycle with alternating colors on the edges, we say that it \emph{arises} from the cycle.

\end{eg}
In general, when $H$ is a graph, then the generators of the ideal $I_H$ are known in terms of these red-blue colorings on the edges of the graph:

\begin{theorem}[\cite{Vill95}, \cite{OH-quad}, see also \cite{Vill}] \label{thm:OH:walks}
	The toric ideal of the edge subring of a graph $G$ is generated by binomials arising from (primitive) even closed walks on $G$.
\end{theorem}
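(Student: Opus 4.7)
The plan is to prove the theorem in two directions and then upgrade to primitivity.

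First, the easy containment. Given an even closed walk $W = v_0, v_1, \ldots, v_{2m}=v_0$ on $G$ with edges $e_i=\{v_{i-1},v_i\}$, the two-coloring that alternates red/blue along $W$ partitions the edges into $E_+ = \{e_i : i \text{ odd}\}$ and $E_- = \{e_i : i \text{ even}\}$. Both monomials $\prod_{e \in E_+} x^{e}$ and $\prod_{e \in E_-} x^{e}$ equal $\prod_{j} x_{v_j}$, so the binomial $\prod_{e \in E_+} t_e - \prod_{e \in E_-} t_e$ lies in $\ker\phi_G = I_G$. I would record that this is precisely what it means for the binomial to \emph{arise} from $W$.

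Next, the containment $I_G \subseteq \langle \text{binomials from even closed walks}\rangle$. Since $I_G$ is a toric ideal, it is generated by pure binomials $f = t^\alpha - t^\beta$ where $\alpha, \beta \in \N^{E(G)}$ have disjoint supports. Form the auxiliary multigraph $M$ on $V(G)$ consisting of the edges in $\alpha$ (colored red, with multiplicities) together with the edges in $\beta$ (colored blue). The equation $\phi_G(t^\alpha)=\phi_G(t^\beta)$ says that in $M$ every vertex has equal red and blue degree, i.e.\ $M$ is \emph{balanced}.

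The key combinatorial step is to show that any nonempty balanced 2-edge-colored multigraph $M$ can be decomposed into closed walks that alternate red and blue. I would argue by induction on $|E(M)|$: pick any vertex $v$ with positive degree, leave along a red edge, then a blue edge, and continue alternating. At any intermediate vertex, the red and blue degrees remaining among unused edges still differ by at most one in the appropriate direction, so the walk can always continue; when it returns to $v$ along a blue edge, we close it off into an even alternating closed walk $W_0$. Deleting the edges of $W_0$ leaves a smaller balanced multigraph, and the inductive hypothesis finishes the decomposition. Reading the binomial arising from $W_0$ gives a generator that divides one of the terms of $f$, and by reading off the decomposition one sees that $f$ lies in the ideal generated by walk-binomials.

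Finally, for the primitivity refinement, note that if a binomial $f$ arising from an even closed walk $W$ is primitive in $I_G$, then $W$ itself must be primitive, since any nontrivial subwalk decomposition of $W$ would yield a factorization of $f$ in $I_G$ violating primitivity. Conversely, any primitive binomial arises from some even closed walk by the decomposition argument above, and among all such walks one may take a primitive one (any reduction of the walk would reduce $f$ too). The main obstacle is the alternating-walk extraction lemma; in particular, verifying that one can always continue the walk with the opposite color requires keeping track of the two-color balance at each vertex as edges are consumed, which is the technical heart of the argument and the place where hypergraph generalizations (as in Theorem~\ref{thm:balancedEdgesEquivalence}) become substantially more delicate.
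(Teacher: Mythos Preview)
The paper does not give its own proof of this theorem; it is cited as a known result from Villarreal and Ohsugi--Hibi, and serves only as background motivation for the hypergraph generalization in Theorem~\ref{thm:balancedEdgesEquivalence}. Your argument is correct and is in fact essentially the standard one, and it specializes the paper's proof of Theorem~\ref{thm:balancedEdgesEquivalence} to the case $d=2$: both proofs first translate membership of a binomial in $I_G$ into the balancing condition~\eqref{degreeCondition} on a bicolored edge multiset, and then greedily peel off an alternating closed walk by tracking the red/blue degree deficit at each vertex as edges are consumed, finishing by induction. The only cosmetic difference is that the paper's hypergraph proof phrases the extraction as ``add any edge of the needed color meeting the current subwalk'' rather than ``extend a path by one alternating step,'' since in a $d$-uniform hypergraph with $d>2$ there is no canonical next vertex to walk to; for graphs the two formulations coincide. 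Your remark that the alternating extension is the technical heart, and the place where the hypergraph case becomes delicate, is exactly the point the paper is making by introducing monomial walks and monomial hypergraphs.
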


Such binomials have been characterized by the following result which, in fact, provides a Graver basis for the toric ideal.
\begin{theorem}[\cite{Vill95}, \cite{OH-quad}, see also \cite{Vill}]\label{thm:OH:characterization}
	Primitive even closed walks on $G$ are one of the following: (i) even cycles, (ii) two odd cycles sharing a vertex, or (iii) two odd cycles such that there are two walks connecting a vertex in one cycle with a vertex in the other.
\end{theorem}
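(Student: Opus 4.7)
My plan is to prove the characterization by first verifying that walks of types (i), (ii), and (iii) are indeed primitive, then conducting a structural analysis to show these are the only possibilities. The easy direction proceeds by explicit construction and a local balancing argument: for each type, I describe the walk, check that the induced $2$-coloring alternates, and show primitivity by arguing that no proper sub-multiset of edges can be balanced at every vertex. For (i) the cycle structure leaves no proper closed sub-walk. For (ii), every vertex of $C_i$ other than the shared vertex has degree $2$, forcing any balanced sub-walk to include all of $C_i$ or none, and a single odd cycle admits no alternating coloring. For (iii), a similar degree-$2$ analysis along the connecting walks and cycles, combined with a parity check on the connecting-walk lengths, yields primitivity.

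For the structural direction, let $W$ be a primitive even closed walk with signed edge vector $\gamma = \alpha - \beta$, where $\partial\gamma = 0$ (balance at every vertex, where $\partial$ is the graph boundary) and $\mathrm{supp}(\alpha) \cap \mathrm{supp}(\beta) = \emptyset$ (from primitivity and the $\gcd = 1$ condition on the associated binomial). Let $G_W$ denote the support multigraph of $W$. Since $W$ is closed, every vertex of $G_W$ has even degree, so $G_W$ is Eulerian. The key observation is that any even cycle $C \subseteq G_W$ on which the signs of $\gamma$ alternate yields a proper balanced sub-vector of $\gamma$, contradicting primitivity. I split on whether $G_W$ equals a single even cycle: if so, we are in type (i); otherwise $G_W$ must contain an odd cycle.

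In the odd-cycle case, since an odd cycle cannot be alternately $2$-colored, $\gamma$ cannot be supported on a single odd cycle, forcing $G_W$ to contain two odd cycles $C_1$ and $C_2$. The structural analysis then proceeds by examining how $C_1$ and $C_2$ are connected in the Eulerian multigraph $G_W$. The sub-cases are: $C_1$ and $C_2$ share a single vertex (yielding type (ii)), or they are joined by two connecting walks between a vertex of $C_1$ and a vertex of $C_2$ (yielding type (iii)); the two connecting walks arise either as two edge-disjoint paths or as a single path traversed twice, depending on how the alternating coloring and the disjoint-support condition interact. In each sub-case, primitivity is used to rule out additional structure (third cycles, pendant branches, extra non-alternating even cycles) via a local rerouting argument that would otherwise produce a strictly smaller balanced sub-walk.

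The main obstacle will be the structural classification in the two-odd-cycle case: precisely characterizing the admissible connectors and eliminating all other configurations. This requires tracking how the alternating $2$-coloring propagates along the walk so that it closes up consistently (which imposes equal parity on the two connecting walks), together with delicate use of $\mathrm{supp}(\alpha) \cap \mathrm{supp}(\beta) = \emptyset$ to distinguish between the edge-disjoint double-path configuration and the single-doubled-path configuration, while ruling out partially overlapping or more exotic connectors. Each piece of extraneous structure must be shown to yield a proper balanced sub-walk by a local rerouting that exploits the Eulerian nature of $G_W$, which is the main technical content of the proof.
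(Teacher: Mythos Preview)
The paper does not give its own proof of this theorem: it is stated as a citation of results of Villarreal \cite{Vill95} and Ohsugi--Hibi \cite{OH-quad} (see also \cite{Vill}) and is used as background for motivating the hypergraph generalization. There is therefore no proof in the paper to compare your proposal against.

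That said, your outline is broadly in line with the arguments in the cited sources. A few remarks on the plan itself. First, the easy direction is not quite as easy as you suggest: in case~(iii) the two connecting walks may share edges with one another and with the odd cycles, so a bare ``degree-$2$ along the connectors'' argument does not immediately apply; the original proofs handle this by working with the binomial (or integer vector) directly rather than with the underlying simple graph. Second, in the structural direction your key step---``any even cycle $C\subseteq G_W$ on which the signs of $\gamma$ alternate yields a proper balanced sub-vector''---is correct but needs care: you must argue that such an alternating even cycle \emph{exists} whenever $G_W$ is not already a single cycle or a pair of odd cycles joined as in (ii)/(iii), and this existence claim is precisely the heart of the classification. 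Your closing paragraph correctly identifies this as the main obstacle, but the proposal as written is a strategy rather than a proof: the ``local rerouting'' arguments that eliminate extra structure are where the actual work lies, and none of them are carried out here.
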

\begin{figure}[htb]
\centering
\includegraphics[trim = 6.3cm 15.3cm 6cm 11.2cm, clip]{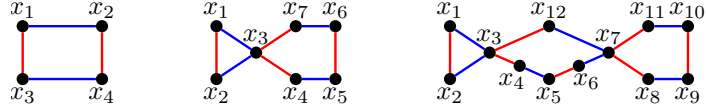}  
\caption{The three types of walks from Theorem~\ref{thm:OH:characterization}.  Colors are explained in Remark~\ref{rmk:colors}.} \label{fig:3walksOnGraphs}
\end{figure}

An even closed walk is the exact structure in a graph required to give rise to a binomial: the sequence of edges in the walk induces a bipartition of the edges with multiplicity and all vertices in the walk have the same degree in each of the two parts. We propose the following generalization of an even closed walk on a hypergraph. Following \cite{Vill}, we call it a monomial walk.

\begin{defn}
	A \emph{monomial walk} on a $d$-uniform hypergraph is an even sequence of edges $\mathcal W := \{e_1, e_2, \dots, e_{2k}\}$, where each edge $e_i$ in the sequence intersects the subsequence $\{e_1, e_2, \dots, e_{i-1}\}$, and each vertex $v\in\cup_{i=1}^{2k} e_i$ covered by $\mathcal W$ satisfies the following balancing condition:
$		| \{ e_i \in\mathcal W : v\in e_i \text{ and $i$ is odd}  \} |
		=
		| \{ e_i \in\mathcal W: v\in e_i \text{ and $i$ is even}  \} |
$.

	Each monomial walk gives rise to a binomial in the ideal $I_H$. Namely, we say that a binomial $f_{\mathcal W}$ \emph{arises from} $\mathcal W$ if
	\[
		f_{\mathcal W} := \prod_{i=1}^{k} t_{e_{2i-1}} - \prod_{i=1}^{k} t_{e_{2i}}.
	\]
	
	A monomial walk is said to be  \emph{primitive} if there does not exist a proper subwalk, that is, a collection $\mathcal W'$ of odd- and even-numbered edges, properly contained in $\mathcal W$, such that $\mathcal W'$ is also a monomial walk.
\end{defn}

For the remainder of the paper, we occasionally refer to a monomial walk as a walk. For convenience, all hypergraphs in this paper will be uniform. In the non-uniform case, binomials in $I_H$ are not necessarily supported by  alternating  monomial walks; although one can extend the balancing condition to the non-uniform case. 
Algebraically, we are simply considering homogeneous toric ideals. 

\begin{rmk}[bicoloring the edges in $f_{\mathcal W}$]\label{rmk:colors}
The classical theorems for graphs use odd and even numbered edges to describe $f_{\mathcal W}$.  For hypergraphs, however, we have found this notation and related pictorial representations extremely tedious. Thus, from now on, we refer to the even- and odd-numbered edges as blue and red edges. This bicoloring is implicit in the graphs case as well.
\end{rmk}

\begin{eg}
	Consider a complete $3$-uniform hypergraph on $12$ vertices. Figure~\ref{fig:3walks} shows three monomial walks on this hypergraph.
\begin{figure}[hbt]
	\centering
	 \subfloat[]{\label{fig:walk1}\includegraphics[scale=0.45]{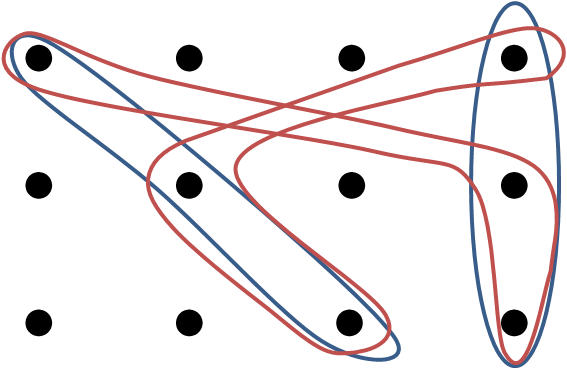}}
	\hspace{1cm}
	 \subfloat[]{\label{fig:walk2}\includegraphics[scale=0.45]{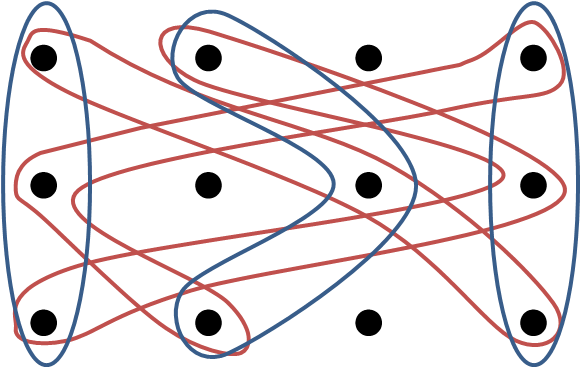}}
	\hspace{1cm}
	 \subfloat[]{\label{fig:walk3}\includegraphics[scale=0.45]{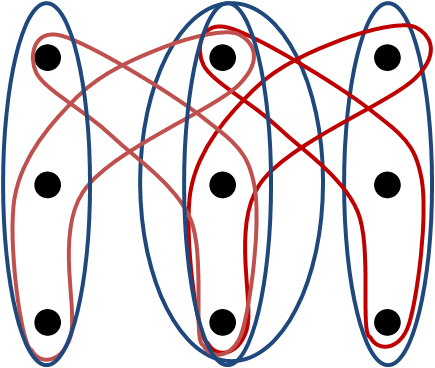}}
	\caption{Examples of Walks
}\label{fig:3walks}
	\end{figure}
\end{eg}

In general, 	there is no squarefree assumption on the binomial $f_{\mathcal W}$. Walking along the same edge multiple times during $\mathcal W$ is allowed, so it is possible that $e_i=e_j$ for $i\neq j$. (See Figure~\ref{fig:3walks}(C) for an example.)
If an edge in a monomial walk appears as both red and blue, then these two copies of the edge can be safely removed. The remaining set of edges constitutes another monomial walk. On the binomial side, this corresponds to a multiple of a smaller binomial.

\begin{defn}
\label{defn:balancedSetOfEdges}
Let $\mathcal E$ be a (multiset) collection of an even number of edges in $H$.  We say that $\mathcal E$ is \emph{balanced} with respect to a given bicoloring of $H$
if for each vertex $v$ covered by $\mathcal E$, the number of red edges containing $v$ equals the number of blue edges containing $v$; in symbols:
\[	\label{degreeCondition}\tag{$\star$}
	\deg_{blue} (v) = \deg_{red} (v).
\]
\end{defn}
\smallskip

The constructions above imply the following.
\begin{theorem}\label{thm:balancedEdgesEquivalence}
Let $H$ be a uniform hypergraph. 
	Any balanced collection of edges $\mathcal E\subset E(H)$ constitutes a monomial walk, or a collection of disjoint monomial walks.

    In particular, the ideal $I_H$ is generated by binomials $f_{\mathcal W}$ arising from primitive monomial walks $\mathcal W$ on ${H}$.
\end{theorem}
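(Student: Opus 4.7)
The plan is to prove the two statements in turn, with the combinatorial decomposition doing the bulk of the work and the ideal-theoretic assertion then following from standard toric-ideal bookkeeping.

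For the first statement, I would reduce to the case in which the intersection graph of $\mathcal{E}$ (vertices are the edges of $\mathcal{E}$, joined whenever the corresponding hyperedges share a vertex of $H$) is connected: any vertex $v$ covered by $\mathcal{E}$ lies in edges that are pairwise adjacent in this intersection graph, so the balancing equality at $v$ stays inside a single component, and consequently each component is itself balanced. For a connected balanced $\mathcal{E}$, $d$-uniformity together with vertex balance gives
\[
d\,|\mathcal{E}_{\mathrm{red}}| \;=\; \sum_{v} \deg_{red}(v) \;=\; \sum_{v} \deg_{blue}(v) \;=\; d\,|\mathcal{E}_{\mathrm{blue}}|,
\]
so there are equally many red and blue edges, say $k$ of each. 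I would then build the sequence $e_1,\dots,e_{2k}$ greedily: let $e_1$ be any red edge, and given $e_1,\dots,e_i$, take $e_{i+1}$ to be any remaining edge of the required color (red for $i$ even, blue for $i$ odd) meeting $V_i:=\bigcup_{j\le i} e_j$.

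The heart of the argument is that the greedy cannot halt with edges still unplaced. Suppose after $i=2j$ steps no remaining red edge meets $V_i$. At each $v\in V_i$ the balance $\deg_{red}(v)=\deg_{blue}(v)$ becomes (placed red-degree at $v$) $=$ (placed blue-degree at $v$) $+$ (remaining blue-degree at $v$). Summing over $v\in V_i$, each placed edge is a $d$-subset of $V_i$, so both placed sums equal $dj$, forcing the remaining blue-degree at every $v\in V_i$ to vanish. Hence no remaining edge of either color meets $V_i$, and connectedness of $\mathcal{E}$ forces the remaining collection to be empty. The odd case $i=2j+1$ is symmetric (and in fact yields the immediate contradiction $-d<0$ on one side), so the greedy always produces a complete alternating monomial walk. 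This balance-accounting step is the one delicate point in the proof.

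For the second statement, I would invoke the standard fact that $I_H=\ker\phi_H$ is generated as an ideal by pure-difference binomials $t^\alpha-t^\beta$ with $\phi_H(t^\alpha)=\phi_H(t^\beta)$ and $\alpha,\beta$ of disjoint support, and note that comparing exponents vertex by vertex shows this is precisely the balancing condition on the bicolored multiset $\mathcal{E}$ recording $\alpha$-many red and $\beta$-many blue copies of the parametrizing edges. By the first statement, $\mathcal{E}=\mathcal{W}_1\sqcup\cdots\sqcup\mathcal{W}_s$; writing $t^\alpha=\prod_i t^{\alpha_i}$ and $t^\beta=\prod_i t^{\beta_i}$ for the red and blue multiplicities of $\mathcal{W}_i$, the telescoping identity
\[
\prod_{i=1}^{s} t^{\alpha_i} - \prod_{i=1}^{s} t^{\beta_i} \;=\; \sum_{i=1}^{s} \Bigl(\prod_{\ell<i} t^{\beta_\ell}\Bigr)\, f_{\mathcal{W}_i}\, \Bigl(\prod_{\ell>i} t^{\alpha_\ell}\Bigr)
\]
writes $t^\alpha-t^\beta$ as an explicit $k[t_e]$-linear combination of the $f_{\mathcal{W}_i}$. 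Finally, if some $\mathcal{W}_i$ is not primitive then by definition it contains a proper balanced sub-walk, and applying the same telescoping inside $\mathcal{W}_i$ and iterating brings us to primitive walks.
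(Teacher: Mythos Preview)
Your proof is correct and follows essentially the same approach as the paper: build the walk greedily by repeatedly adjoining an edge of the required color that meets the edges already placed, argue via the balance condition that this process cannot stall, and then appeal to standard toric-ideal facts (the paper cites Sturmfels' Lemma~4.6) to pass from arbitrary binomials to primitive ones. The only cosmetic differences are that you split into connected components of the intersection graph at the outset whereas the paper restarts on the residual $\mathcal{E}$, and your ``greedy cannot stall'' step is made explicit via the $d$-uniformity summation rather than the paper's more informal observation that an unbalanced remainder forces an unbalanced vertex in $V_i$.
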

\begin{proof}
A binomial $f$ is in the ideal $I_H$ if and only if the set of edges $\mathcal E$ appearing in the support of the binomial is balanced. Namely, $f\in I_H$ is given by a collection of edges such that
\[
	f=\prod_{\text{blue edges }e} t_e - \prod_{\text{red edges }e'} t_{e'}
\]
where the coloring is induced by the binomial $f$ itself. Being in the kernel of $\varphi_{H}$ is equivalent to
\[ 	\varphi_{H}\left(\prod_{\text{blue edges }e} t_e\right)=\varphi_{H}\left(\prod_{\text{red edges }e'} t_{e'}\right).
\]
This, in turn, is equivalent to Condition~\eqref{degreeCondition}.

	We need to show that any balanced collection of edges with respect to some bicoloring can be written as arising from a monomial walk, or a set of disjoint monomial walks.  	
	Throughout the proof we will maintain a sequence of visited edges $\mathcal W$ from $\mathcal E$, and the set of red and blue degrees of each vertex $v$ covered by $\mathcal E$, which we denote by $\deg_{red}(v;\mathcal E)$ and $\deg_{blue}(v;\mathcal E)$ respectively.
	
	We begin with $\mathcal W=\emptyset$. Pick an arbitrary blue edge $e\in\mathcal E$, define $e_1:=e$, and update $\mathcal W=\mathcal W\cup \{e_1\}$, $\mathcal E=\mathcal E - \{e_1\}$, noting that $\deg_{blue}(v;\mathcal E)$ has decreased by $1$, for every $v\in e_1$. Pick an arbitrary $u_1\in e_1$. By~\eqref{degreeCondition}, there exists a red edge $e'\in\mathcal E$ such that $u\in e'$. Let $e_2:=e'$ and update $\mathcal W=\mathcal W\cup\{e_2\}$, $\mathcal E=\mathcal E - \{e_2\}$, noting that $\deg_{red}(v;\mathcal E)$ has decreased by $1$, for every $v\in e_2$. Clearly, $u_1$ is now balanced in $\mathcal W$.

Repeat recursively as follows. Check whether $\deg_{blue}(v;\mathcal E)=\deg_{red}(v;\mathcal E)$ for all $v$. If not, there must exist an unbalanced vertex covered by $\mathcal W$. For odd (resp.\ even) $i$, there exists a blue (resp.\ red) edge in $\mathcal E$ that intersects an even (resp. odd) edge $e_j$ in $\mathcal W$, $j<i$. Call this edge $e_i$ and move it to $\mathcal W$.
 
On the other hand, if $\deg_{blue}(v;\mathcal E)=\deg_{red}(v;\mathcal E)$ holds for every $v$, we have produced a monomial walk $\mathcal W$. In the case where $\mathcal E$ is not empty, the algorithm restarts on the current $\mathcal E$ with empty $\mathcal W$ to find additional edge-disjoint monomial walks. Finally, the algorithm stops when $\mathcal E$ is empty.
	
What we have shown is that every binomial in the ideal $I_H$ arises from a finite collection of monomial walks. That only primitive binomials are necessary to generate the ideal follows from standard arguments: e.g. see Lemma 4.6 in \cite{St}.
\end{proof}

Naturally, if a certain edge appeared multiple times in $\mathcal E$, each occurrence of the edge gets a unique label, so that the above argument is valid for multisets of edges.

Recall that the Graver basis is a special generating set of the toric ideal, usually very large and algebraically redundant, but combinatorially nice.
What we have shown above is that the Graver basis of $I_H$ consists of primitive monomial {walks} with support on the edge set of ${H}$.
In addition, as in the graphs case, since the Graver basis contains all reduced Gr\"obner bases, we can say the following:
\begin{cor}\label{cor:grobner-is-a-walk}
	If $f$ is a polynomial in any reduced Gr\"obner basis of $I_H$, then $f=f_{\mathcal W}$ for some primitive monomial walk $\mathcal W$.
\end{cor}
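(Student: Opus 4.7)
The plan is to obtain this as a direct consequence of Theorem~\ref{thm:balancedEdgesEquivalence} combined with the classical containment of any reduced Gr\"obner basis inside the Graver basis of a toric ideal. Specifically, I would invoke the standard chain
\[
\text{reduced Gr\"obner basis} \ \subseteq\ \text{universal Gr\"obner basis} \ \subseteq\ \mathcal{Gr}(I_H),
\]
where $\mathcal{Gr}(I_H)$ denotes the Graver basis; see, e.g., \cite{St}. It therefore suffices to show that every element of $\mathcal{Gr}(I_H)$ has the form $f_{\mathcal W}$ for some primitive monomial walk $\mathcal W$ on $H$.

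For the second step, I would check that algebraic primitivity (no binomial $t^{u'} - t^{v'}\in I_H$ with $t^{u'}$ and $t^{v'}$ properly dividing the two monomials of $f$) is equivalent to walk primitivity in the sense of the definition above. One direction is transparent: by Theorem~\ref{thm:balancedEdgesEquivalence}, the binomial $f$ arises from a balanced multiset $\mathcal E$ of edges, and any proper balanced sub-multiset $\mathcal E'$ yields a binomial $g\in I_H$ whose blue and red monomials properly divide those of $f$, so $f$ is not algebraically primitive. Conversely, any such $g$ corresponds via Theorem~\ref{thm:balancedEdgesEquivalence} to a balanced sub-multiset of $\mathcal E$, giving a proper sub-walk of $\mathcal W$.

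The main technical obstacle is the bookkeeping in the presence of repeated edges: when the same edge $e$ is traversed several times, or appears with both colors, one must carefully track multiplicities in $\mathcal E$ versus multiplicities in the monomials of $f_{\mathcal W}$, and verify that any blue-red cancellation still leaves a strictly smaller, balanced sub-multiset (as already remarked before Definition~\ref{defn:balancedSetOfEdges}). Once this dictionary between balanced sub-multisets and divisor binomials in $I_H$ is in place, the corollary drops out immediately from the inclusion into the Graver basis.
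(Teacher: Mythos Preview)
Your proposal is correct and follows the same route as the paper: the paper simply notes, in the paragraph preceding the corollary, that Theorem~\ref{thm:balancedEdgesEquivalence} identifies the Graver basis of $I_H$ with the set of binomials arising from primitive monomial walks, and then invokes the standard containment of every reduced Gr\"obner basis in the Graver basis. Your more explicit verification that algebraic primitivity coincides with walk primitivity is a welcome elaboration of what the paper leaves implicit.
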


The natural problem now is to understand primitive monomial walks on a given hypergraph $H$. We will concern ourselves with a criterion which will allow us to determine if a given set of edges is a primitive monomial walk.  To this end, we make the following definition.
		
\begin{defn}\label{defn:monomialH}
	A multihypergraph $\mathcal H$ is called a \emph{monomial hypergraph} if its edges can be colored so that they are balanced (see Definition~\ref{defn:balancedSetOfEdges}). Such an $\mathcal H$ is \emph{primitive} if there exists a balanced bicoloring that makes the corresponding monomial walk primitive.
\end{defn}

The purpose of this definition is twofold.
First, note that a monomial hypergraph corresponds to a monomial walk.
However, a given walk comes equipped with the blue and red bicoloring of the edges (i.e., the binomial is given explicitly); whereas in a monomial hypergraph, the coloring is not fixed (i.e., it represents the support of the binomial).
In the graph case, this distinction is less important because the coloring is more obvious. For example, when we say that ``even cycles are generators of the toric ideal of a graph'', we do not specify how to color the edges of the cycle; however, upon the inspection of the proof, of course, it is obvious that the edges are colored in alternating colors to give rise to a binomial. In hypergraphs, such a choice of coloring may not always be immediate or unique, so this distinction is important.
Second, a monomial hypergraph is not a simple hypergraph: there is no squarefree assumption on the monomial walks, so an edge in a hypergraph may be traced multiple times in a walk over its edge set. This generality is required since we cannot assume that the binomials in $I_H$ are squarefree.

\begin{cor}\label{cor:ugb}
	The set of binomials
	\[
		\{f_{\mathcal W}  | \mathcal W \text{ is a bicolored primitive  monomial hypergraph over the hypergraph } H \}
	\]
	is a universal Gr\"obner basis for the toric ideal $I_H$.
\end{cor}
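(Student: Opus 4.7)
The plan is to invoke the standard hierarchy relating reduced Gröbner bases, universal Gröbner bases, and Graver bases of a toric ideal, together with the characterization of the Graver basis of $I_H$ implicit in the proof of Theorem~\ref{thm:balancedEdgesEquivalence}.

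First, I would make the identification between primitive bicolored monomial hypergraphs over $H$ and the Graver basis of $I_H$ fully explicit. Recall that a binomial $f = m_1 - m_2 \in I_H$ lies in the Graver basis precisely when it is primitive in the algebraic sense, meaning there is no binomial $m_1' - m_2' \in I_H$ with $m_1' \mid m_1$ and $m_2' \mid m_2$ properly. Translating through the correspondence in Theorem~\ref{thm:balancedEdgesEquivalence}, such a proper divisor pair corresponds exactly to a proper sub-multiset of the bicolored edges of the associated monomial walk $\mathcal W$ that is itself balanced, i.e., a proper sub-walk. Hence algebraic primitivity of $f_{\mathcal W}$ coincides with combinatorial primitivity of $\mathcal W$ in the sense of Definition~\ref{defn:monomialH}, and the Graver basis of $I_H$ is exactly $\{f_{\mathcal W} : \mathcal W \text{ a primitive monomial hypergraph over } H\}$.

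Second, I would apply the classical fact (Proposition~4.11 in~\cite{St}) that the universal Gröbner basis of any toric ideal is contained in the Graver basis, and that the Graver basis is itself a Gröbner basis with respect to every term order. Combined with the identification from the previous step, this immediately yields that the set in the statement is a universal Gröbner basis of $I_H$.

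The only point that requires any real care is the verification in the first step that combinatorial sub-walks correspond bijectively to algebraic divisor pairs; this is essentially bookkeeping, using the fact that the degree-balancing condition $\deg_{blue}(v) = \deg_{red}(v)$ is precisely what it means for $\varphi_H$ to send the blue and red monomials to the same element, so any sub-multiset of the edges inducing a smaller balanced pair of divisors is itself a monomial walk. There is no substantial obstacle beyond this translation.
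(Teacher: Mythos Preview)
Your proposal is correct and follows the same route as the paper: identify the Graver basis of $I_H$ with the set of $f_{\mathcal W}$ for primitive $\mathcal W$ (which is exactly the paragraph preceding Corollary~\ref{cor:grobner-is-a-walk}), then invoke the standard inclusion of the universal Gr\"obner basis in the Graver basis. The paper leaves the corollary without a separate proof, treating it as immediate from that discussion; your write-up simply makes the translation between algebraic and combinatorial primitivity, and the citation to \cite{St}, more explicit.
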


In summary: a monomial walk has an alternating coloring on the edges and requires that all the visited vertices are balanced with respect to that coloring; and a monomial hypergraph is a multihypergraph with the supporting edge set being that of $H$, it corresponds to a monomial walk, but its coloring is not fixed.
\emph{The monomial hypergraph corresponding to the monomial walk $\mathcal W$ is the combinatorial signature of the binomial $f_\mathcal W\in I_H$.}

%%%%%%%%%%%%%%%%%%%%%%%%%%%%
%%%%%%%%%%%%%%%%%%%%%%%%%%%%
\section{Monomial hypergraphs: from log-linear models to discrepancy} \label{sec:relevance}
 
Before exploring the structure of monomial hypergraphs in the next section, let us briefly reflect on their relevance. 
We explain how they arise naturally in algebraic statistics, and proceed to show how they relate to combinatorial discrepancy of hypergraphs.

\subsection{Parameter hypergraphs}\label{subsec:statistics}

Monomial maps make a notable appearance in statistics: they encode important families of statistical models for discrete random variables called \emph{log-linear models}. A log-linear model is a family of joint probability distributions that are described by monomials in the model parameters (equivalently, minimal sufficient statistics of the model are in the row span of a linear map, and hence the name; but let us not dwell on this here). To see how monomial maps arise, let us consider the following very simple example. Two random variables $Z_1$ and $Z_2$, taking $a$ and $b$ values, respectively, are said to be independent if the joint probability $P_{ij}:=Prob(Z_1=i,Z_2=j)$ factors as follows: $P_{ij} = Prob(Z_1=i)Prob(Z_2=j)$ for all $i\in[a]$ and $j\in[b]$. Denote the probabilities $Prob(Z_1=i)$ and $Prob(Z_2=j)$ by $x_i$ and $x_{a+j}$, respectively. The unknown quantities $x_1,\dots,x_{a+b}$ are  called the \emph{model parameters}; they completely determine this family of distributions. We can summarize this simple example by saying that the {independence model} for $Z_1$ and $Z_2$ is specified by the monomial map that sends $P_{ij}$ to $x_ix_{a+j}$.
In this sense, the edge subring $k[K_{a,b}]$ of the complete bipartite graph $K_{a,b}$ on the vertex set $\{x_1,\dots,x_a\}\sqcup \{x_{a+1},\dots,x_{a+b}\}$ describes the structure of the joint probability distributions that belong to the independence model.

This correspondence can be  naturally extended to any more complicated model for discrete random variables $Z_1,\dots,Z_m$, whose joint probabilities are parametrized by squarefree monomials in the model parameters $x_1,\dots,x_n$. 
Each such log-linear model $\mathcal M$ is encoded by a hypergraph $H_{\mathcal M}$ on the vertex set $x_1,\dots,x_n$, constructed as follows:  
$\{x_j\}_{j\in J}$ is an edge of $H_\mathcal M$ if and only if the index set $J$ describes one of the joint probabilities in the model; that is, there exist values $i_1,\dots,i_m$ such that  $Prob(Z_1=i_1,\dots,Z_m=i_m)=\prod_{j\in J} x_j$. 

Henceforth, we will name $H_\mathcal M$ the \emph{parameter hypergraph of the model $\mathcal M$}. 
Somewhat surprisingly, toric ideals $I_{H_\mathcal M}$ are quite relevant for testing goodness of fit of the model $\mathcal M$. Without going into statistical details, a non-asymptotic test for model/data fit is carried out by exploring the fiber of a given point in the image of the monomial map $\phi_{H_\mathcal M}$. 
In terms of the hypergraph $H_\mathcal M$, an observed data point corresponds to a collection of edges; for convenience, think of them as colored blue. The fiber of that data point then consists of all red edge sets that balance it, and thus  have the same image under the map $\phi_{H_\mathcal M}$.   
The recipe for exploring a fiber is to move from any point to another point by applying a \emph{Markov move}, which corresponds to  a generator of the toric ideal $I_{H_\mathcal M}$ of the model.  Since the binomials in $I_{H_\mathcal M}$ are supported by balanced edge sets, this move is interpreted as  removing the observed (blue) edges and replacing them by the red edges, arriving at another monomial with the same image. 
 The correspondence between Markov moves and toric generators is well-known in algebraic statistics literature, and often referred to as the fundamental theorem of Markov bases.  
The technical description of how such a random walk  on a fiber is actually implemented is beyond the scope of this paper; but the interested reader is referred to  \cite{algStatBook} and the recent text \cite{AHT2012}.

In general, finding Markov bases for many relevant statistical modes is a wide-open problem; some complexity bounds and a few structural theorems are known.  
Therefore, our goal is to understand the combinatorial structure of monomial hypergraphs supported on $H_\mathcal M$. Instead of focusing on one log-linear model, we explore combinatorial signatures of general monomial hypergraphs.  We focus on homogeneous ideals, thus the models parametrized by uniform hypergraphs.
Finally, as explained in \cite{AHT2012}, for homogenous toric models with statistical sampling constraints or with structural zeros in the model, a minimal generating set of the toric ideal may not suffice for connecting the fibers, and instead squarefree part of the Graver basis is used. 

As mentioned in the introduction, the edge subring of a \emph{graph} has appeared in several results in algebraic statistics. By defining the parameter hypergraph $H_\mathcal M$, which the reader should agree is quite a natural generalization, we give an applied motivation to extend the well-known results on toric ideals of graphs to the more general case. By Theorem~\ref{thm:balancedEdgesEquivalence}, Markov moves for any (homogeneous) log-linear model $\mathcal M$ are encoded by primitive monomial hypergraphs supported on a (uniform) $H_{\mathcal M}$.  
One of our main goals is to frame this problem in terms of combinatorics of hypergraphs.  

\subsection{A dual problem: combinatorial discrepancy}\label{subsec:discrepancy}

To that end, we begin by noting a relation between monomial hypergraphs and zero-discrepancy hypergraphs. Broadly, the combinatorial discrepancy problem asks for a coloring of the vertices of a hypergraph \mbox{$H=(V, E)$} such that each hyperedge contains roughly the same number of vertices of each color. (The reader is directed to \cite{Matousek2010, Chazelle2002} for a detailed introduction to discrepancy theory.) Let us formally define the discrepancy of a hypergraph.

\begin{defn}
The \emph{discrepancy} of a hypergraph $H=(V,E)$, denoted  $\disc(H)$, is the quantity $\disc(H)=\min_{\chi: V\to \pm1} \max_{e\in E} \sum_{v\in e} \chi(v)$. In words, it is the minimum, over all colorings $\chi$, of the maximum discrepancy of an edge in $H$.
\end{defn}

Given a hypergraph $H$, the \emph{dual hypergraph} $H^*$ is the hypergraph containing a vertex $v^*$ for each edge $e \in E(H)$ and an edge $e^*$ for each $v\in V(H)$; here $e^*=\{v^*\in V(H^*): v^* \text{ corresponds to an edge }e\in E(H) \text{ that is incident in $H$ to vertex }v\}$. More briefly, the vertex-edge incidence matrix of $H^*$ is the transpose of the vertex-edge incident matrix of $H$. 

It is not hard to see that a zero-discrepancy hypergraph $H^*=(V^*,E^*)$ is dual to a monomial hypergraph $H=(V,E)$: the mapping $\chi: V^*\to \pm 1$ acts on the vertices of $H^*$ but it acts on the edges of $H$. Zero discrepancy for $H^*$ implies that each edge of $H^*$ contains the same number of red and blue vertices, and by consequence each vertex of $H$ is incident to the same number of red and blue edges.  This establishes the following theorem.

\begin{theorem}\label{thm:discrepancy}
A hypergraph $H$ is a monomial hypergraph if and only if $\disc(H^*)=0$. Furthermore, a monomial hypergraph $H$ is primitive if and only if no coloring $\chi: V^*\to \pm 1$ realizing $\disc(H^*)=0$, also induces a zero discrepancy coloring on a (non-empty) subhypergraph $\tilde{H}$ of $H^*$ produced by deleting any subset of vertices and shrinking the edges.  
\end{theorem}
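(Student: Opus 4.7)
The plan is to prove both equivalences by unpacking the vertex--edge duality between $H$ and $H^*$. The key identity is that a $\pm 1$ coloring $\chi$ of $V(H^*)$ is the same data as a blue/red bicoloring of $E(H)$, since $V(H^*)=\{v^*_e : e\in E(H)\}$ is indexed by the edges of $H$. Under this identification ($+1\leftrightarrow$ blue, $-1\leftrightarrow$ red), and using that the dual edge $e^*_v$ consists of the vertices $v^*_e$ with $e\ni v$, we have
\[
\sum_{v^*\in e^*_v}\chi(v^*)\;=\;\sum_{e\ni v}\chi(e)\;=\;\deg_{blue}(v)-\deg_{red}(v)
\]
for every $v\in V(H)$. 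Thus $\chi$ witnesses $\disc(H^*)=0$ exactly when the induced bicoloring of $E(H)$ satisfies condition~\eqref{degreeCondition} at every covered vertex, i.e., when the edges of $H$ form a balanced collection in the sense of Definition~\ref{defn:balancedSetOfEdges}. Existence of such a $\chi$ is therefore equivalent to $H$ being a monomial hypergraph per Definition~\ref{defn:monomialH}, giving the first equivalence.

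For the primitivity half, I would extend the dictionary to sub-objects. Deleting a vertex set $S\subseteq V(H^*)$ and shrinking each edge $e^*_v$ to $e^*_v\setminus S$ corresponds, via duality, to discarding the edge sub-collection $\mathcal E_0=\{e\in E(H) : v^*_e\in S\}$ and keeping $\mathcal E':=E(H)\setminus\mathcal E_0$ with the coloring inherited from $\chi$. Zero discrepancy of the resulting $\tilde H$ says that every remaining edge $e^*_v\setminus S$ has vanishing sum, which by the identity above becomes $\deg_{blue}(v;\mathcal E')=\deg_{red}(v;\mathcal E')$ for every $v$. Hence $\tilde H$ inherits zero discrepancy from $\chi$ precisely when the bicolored sub-collection $\mathcal E'$ is itself balanced, i.e., when the monomial walk on $H$ encoded by $\chi$ admits a proper non-empty monomial sub-walk. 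Reading this through Definition~\ref{defn:monomialH}, the non-existence of such a pair $(\chi,\tilde H)$ is exactly the primitivity of $H$.

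I do not anticipate any real obstacle; the proof is a direct translation via the incidence-matrix transpose that defines $H^*$. The only bookkeeping worth flagging is: (i) shrinking may leave an edge $e^*_v$ empty when every $H$-edge through $v$ lies in $\mathcal E_0$, in which case zero discrepancy at $e^*_v$ is automatic and consistent with the balancing condition being vacuous at uncovered vertices; and (ii) since edges may repeat in a monomial walk, one should work with labeled multisets of edges just as in the proof of Theorem~\ref{thm:balancedEdgesEquivalence}, so that the duality correctly handles multihypergraphs.
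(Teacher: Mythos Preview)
Your proposal is correct and follows exactly the same approach as the paper: unwinding the vertex--edge duality so that a $\pm1$ coloring of $V(H^*)$ is a bicoloring of $E(H)$, and the edge-sum $\sum_{v^*\in e^*_v}\chi(v^*)$ becomes $\deg_{blue}(v)-\deg_{red}(v)$. The paper's own argument is in fact just the one-paragraph observation preceding the theorem (plus the remark after it that vertex deletion in $H^*$ corresponds to edge deletion in $H$), so your write-up is a more explicit version of the same duality translation, including the bookkeeping about empty shrunk edges and multisets that the paper leaves implicit.
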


Deleting a vertex from $H^*$ and shrinking its incident edges is equivalent to deleting an edge in the dual hypergraph $H$. There are some other interesting facts here. While a monomial hypergraph can have multiple edges, this is not true or necessary for the dual zero-discrepancy hypergraph, as multiple edges in $\mathcal{H}$ would correspond to vertices that are incident to the same set of edges in the dual. 

An important inapproximabillity result of \cite{CharikarNeNi2011} on discrepancy implies that it is NP-Hard to decide whether a hypergraph \mbox{$H=(V,E)$} is the support for a monomial hypergraph, or whether for every bicoloring of $E(H)$, there exists a vertex $v\in V(H)$ such that \mbox{$\left|\deg_{red}(v)-\deg_{blue}(v)\right|\geq \Omega(\sqrt{|E(H)|)}$}.

Given the difficulty of deciding the discrepancy of a hypergraph, we pursue a structural theorem for monomial hypergraphs in the next section.

%%%%%%%%%%%%%%%%%%%%%%%%%%%%
%%%%%%%%%%%%%%%%%%%%%%%%%%%%

\section{Sparse Bouquets}  \label{sec:sparseMonomialHypergraphs}

Theorem~\ref{thm:balancedEdgesEquivalence} can be used to give a characterization of (primitive) monomial walks in terms of the existence of an edge partition satisfying the degree condition~(\ref{degreeCondition}). However it gives no other information about the structure of the supporting hypergraphs. 

In this section, we use Theorem~\ref{thm:balancedEdgesEquivalence} to construct and study supports of monomial hypergraphs that parallel the graphs case. For example, consider case $(ii)$ from Theorem~\ref{thm:OH:characterization}. Two odd cycles glued at a vertex support a primitive monomial walk on a graph.  Increasing the degree of that core vertex could produce monomial walks, but they can never be primitive. In contrast, the core vertices in hypergraph supports of primitive walks can have degree larger than $2$; in fact, it can be arbitrarily high!

The first three results in this section directly generalize cases $(i)$, $(ii)$ and $(iii)$ (for short cycles) from Theorem~\ref{thm:OH:characterization}. 
First, a pair of perfect matchings generalizes an even cycle, and its supporting hypergraph is characterized in Proposition~\ref{prop:matchings}.
Second, a monomial sunflower generalizes a bow-tie (two $3$-cycles sharing a vertex),  and its supporting hypergraphs are characterized in Proposition~\ref{prop:sunflowers}.
Finally, a partitioned core sunflower generalizes two 3-cycles connected by two paths, and the supporting hypergraphs are described in Proposition~\ref{prop:partitionedCoreSunflowers}.

A further generalization summarizing the above propositions is given in Theorem~\ref{thm:sunflowersRelaxedCore}. So, unlike in the case of graphs, these three results do not exhaustively characterize monomial hypergraphs. Nevertheless, starting from sunflowers and matchings, natural building blocks of monomial hypergraphs, we can obtain much more complex structures. We delve more into the open problem of structurally characterizing primitive monomial hypergraphs in Section~\ref{sec:problems}.

\subsection{Matchings}

\begin{defn}A \emph{matching} on a hypergraph $H=(V,E)$ is a subset $M\subset E$ of independent edges, that is, no two edges intersect. A matching is called \emph{perfect} if it covers all the vertices of the hypergraph, i.e., $V(M)=V$. \end{defn}

The simplest monomial hypergraph can be formed using a pair of perfect matchings on the same set of variables. Next we give a sufficient and necessary condition for the primitivity of such a monomial hypergraph. A hypergraph $H=(V,E)$ is said to be \emph{connected} if its primal graph is connected, where the primal graph has the same vertex set and an edge between any two vertices contained in the same hyperedge.

\begin{prop}[Pair of matchings]\label{prop:matchings}
    Let $H=(V,E)$ be a $d$-uniform hypergraph such that $E=M_r\sqcup M_b$, where $M_r$, $M_b$ form two edge-disjoint perfect matchings on the vertex set $V$. 
    A monomial hypergraph $\mathcal H$ with support $H$ is primitive if and only if $H$ is connected and $\mathcal H$ contains no multiple edges.
\end{prop}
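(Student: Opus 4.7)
The plan is to exploit the rigid structure imposed by two perfect matchings: every vertex $v \in V$ is incident to exactly one edge $r(v) \in M_r$ and one edge $b(v) \in M_b$, so the incidence pattern at $v$ couples only these two edges. I encode a bicoloring of $\mathcal H$ by integers $a_e, b_e \geq 0$ counting the number of red and blue copies of each edge $e$, with $a_e+b_e \geq 1$ since $\mathcal H$ has support $H$; the balancing condition at $v$ becomes the single equation
\[
a_{r(v)} + a_{b(v)} \;=\; b_{r(v)} + b_{b(v)}.
\]

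For the ``only if'' direction, I would first rule out color-mixing on a single edge. If $a_e \geq 1$ and $b_e \geq 1$ for some $e$, then one red and one blue copy of $e$ is itself a balanced sub-multiset (contributing $+1$ to both the red and blue degree at each $v \in e$ and nothing elsewhere), hence a proper subwalk by Theorem~\ref{thm:balancedEdgesEquivalence}, contradicting primitivity. So each edge is monochromatic and I may write $m_e := a_e + b_e$ together with a color $\sigma(e) \in \{R,B\}$; the balancing equation then collapses to $\sigma(r(v)) \neq \sigma(b(v))$ and $m_{r(v)} = m_{b(v)}$. If $H$ were disconnected with components $H_1,\dots,H_k$, then the multiedges over each $H_i$ satisfy the balancing condition independently, so any single component is a proper subwalk of $\mathcal H$; hence $H$ must be connected. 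Connectivity of the primal graph then propagates the equalities $m_{r(v)}=m_{b(v)}$ through all incidences to a common multiplicity $m$. If $m \geq 2$, then a single copy of each edge (with the same coloring) is already a balanced proper sub-multiset, another contradiction; so $m=1$ and $\mathcal H$ has no multiple edges.

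For the ``if'' direction, I assume $H$ is connected and $\mathcal H$ equals $H$ with every edge appearing once; bicolor $M_r$ red and $M_b$ blue, so balance is automatic at every vertex. To verify primitivity, suppose $\emptyset \neq \mathcal E' \subsetneq E(H)$ were a balanced proper subset, and pick $e \in \mathcal E'$, say $e \in M_r$. Balancing at each $v \in e$ demands a blue edge of $\mathcal E'$ through $v$, and $b(v)$ is the only candidate, forcing $b(v) \in \mathcal E'$. Iterating this closure step through adjacent edges and invoking connectivity of the primal graph, $\mathcal E'$ is forced to equal $E(H)$, contradicting properness.

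The main obstacle is the multiplicity bookkeeping: primitivity must rule out both cancelling red--blue pairs within a single edge and uniform scalings of the entire walk, and handling the latter requires propagating the coupling $m_{r(v)}=m_{b(v)}$ through $H$'s incidence structure. This is exactly where the perfect-matching hypothesis does the work, since each vertex's balancing equation involves only two edges and reduces the combinatorics to connectivity of the primal graph.
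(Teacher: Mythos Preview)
Your proof is correct and follows essentially the same route as the paper: for necessity, both arguments rule out red--blue cancellation on a single edge, then use the $2$-regularity of $H$ to propagate a common multiplicity across a connected component, and conclude that multiplicity $\geq 2$ or disconnectedness yields a proper subwalk; for sufficiency, both fix the obvious bicoloring $M_r$ red, $M_b$ blue and run the closure argument through connectivity. The only cosmetic differences are that you introduce the explicit bookkeeping variables $a_e,b_e$ (the paper argues directly with multiplicities), and you deduce connectedness by noting that each component would itself be balanced, whereas the paper invokes the fact that a monomial walk is connected by definition.
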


\begin{proof}
     Let $\mathcal H$ be a primitive monomial hypergraph and consider a primitive monomial walk $\mathcal W$ on the edges of $\mathcal H$ with an appropriate associated bicoloring. Any walk is connected, so suppose for contradiction that edge $e\in E$ has multiplicity $k$ in $\mathcal W$. By construction of the supporting hypergraph $H$, there exist exactly two distinct edges in $H$, say $e$ and $f$, containing any vertex $v$. We may assume that all copies of $e$ in $\mathcal W$ are red, as two copies of $e$ in $\mathcal W$ that belong to a different color partition would create a trivial subwalk. By the degree condition~(\ref{degreeCondition}) there must be $k$ copies of $f$ in $\mathcal W$, all colored blue. Edges $e$ and $f$ are distinct so there exists a vertex $u$ in $f$ and a unique edge $g$ in $H$ such that $g$ contains $u$ and has multiplicity $k$ in $\mathcal W$. But $H$ is connected, so we may repeat this process for each vertex in $\mathcal W$ to conclude that all edges in $E$ appear in $\mathcal W$ in $k$ copies of the same color, contradicting the primitivity of $\mathcal W$.

    For sufficiency, we need only produce a bicoloring for the edges of $H$, taken with no multiplicity, such that the associated walk is primitive. 
    We construct the required bicoloring by coloring edges in $M_r$ red and edges in $M_b$ blue. Assume by way of contradiction that there exists a smaller walk $\mathcal W$ in $\mathcal H$ that is also a monomial walk. Let $V_{\mathcal W}$ and $E_{\mathcal W}$ be the vertex and edge sets induced by the walk $\mathcal W$, and let $\mathcal H_{\mathcal W}=(V_{\mathcal W},E_{\mathcal W})$. It is clear that every vertex in any  monomial walk, and so specifically in $\mathcal H_{\mathcal W}$ must have degree at least 2 (the sum of the blue degree and the red degree). Since $\mathcal W$ is a subwalk, there must be some edge or vertex of $\mathcal H$ that $\mathcal W$ does not contain. But every vertex in $H_{\mathcal W}$ has degree 2 so there can be no more edges in $\mathcal H\setminus \mathcal H_{\mathcal W}$ containing any vertex from $\mathcal H_{\mathcal W}$. Then $H$ is disconnected.
\end{proof}

Figures~\ref{fig:3walks}(A) and~\ref{fig:3walks}(B) are primitive matchings. The former is also commonly called a \emph{tight cycle}.

\subsection{Monomial Sunflower}

As pairs of matchings have the property that each vertex has degree $2$, they generalize even cycles on graphs. In contrast, the next case in Theorem~\ref{thm:OH:characterization} has a distinguished vertex of higher degree.  Its natural generalization in hypergraphs, which we will call a monomial sunflower, is the topic of this subsection, and is based on a very useful, highly structured hypergraph, the \emph{sunflower}, which, incidentally, is guaranteed to occur in hypergraphs with large enough edge sets, independently of the size of the vertex set. (See e.g.\ \cite{Jukna}.) 
 \begin{defn} \label{defn:sunflower}
A $d$-uniform hypergraph $H=(V,E)$ is a \emph{sunflower} if $e_i \cap e_j = C$ for all edges $e_i\neq e_j\in E$, and $C\subset V$. The set of vertices $C$ is called the \emph{core} of the sunflower, and each $e_i$ is called a \emph{petal}.
\end{defn}

In general, one may allow the core to be empty, in which case you simply get a matching. In this paper, we will explicitly use the name matching instead of allowing for empty cores.

For the remainder of the paper, fix the following notation.
\begin{notation}\label{notation:removingCores}
For a subset $C\subset V(H)$, define $H-C$ to be the hypergraph with vertex set $(V\smallsetminus C)$ and edges $\{e_i \smallsetminus C: e_i\subset E(H)\}$.
Similarly, for a subset $C\subset V(\mathcal H)$, define $\mathcal H-C$ to be the multihypergraph with vertex set $(V\smallsetminus C)$ and edges $\{e_i \smallsetminus C: e_i\subset E(\mathcal H)\}$.
The hypergraphs $H-C$ and $\mathcal H-C$ are not necessarily uniform and may include empty edges, or multiple copies of an edge. See Figures~\ref{fig:nonPrimitiveMonomialSunflower}, \ref{fig:primitiveMonomialSunflower} and \ref{fig:simpleNonPrimitiveBouquet} for examples.
\end{notation}

Clearly, there is no nontrivial monomial walk on a sunflower: since the degree of any non-core vertex is $1$, every petal needs to be both blue and red, resulting in the zero binomial.
Thus, a sunflower alone cannot be a support of a monomial hypergraph. The non-core petal vertices could, however, be balanced by a perfect matching. To that end, consider $H$ to be a sunflower with a perfect matching on its non-core vertices. In Proposition~\ref{prop:sunflowers}, we show that a primitive  monomial hypergraph with support $H$ can be characterized by counting petals.

Consider the set of connected components
\[
	 H-C = \bigcup_{j\in\mathcal{I}}G_{j}.
\]
For convenience, define $G_{\mathcal J}:=\bigcup_{j\in\mathcal{J}}G_{j},$ for any index set $\mathcal J\subseteq \mathcal I$. By abuse of notation we will write $\left|G_{j}\right|$ for the number of edges in the multiset $\{e\in H: e\cap C \neq\emptyset \text{ and } e\smallsetminus C\in G_{j}\}$.

The next proposition is based on the idea that counting petals detects balanced walks on $H$.
\begin{prop}[Sunflower]\label{prop:sunflowers}
Let $H$ be a hypergraph consisting of a sunflower and a perfect matching on the non-core vertices of the sunflower. The hypergraph $H$ is the support of a monomial hypergraph $\mathcal H$ if and only if there exists a partition $(G_\mathcal J, G_\mathcal K)$ of the connected components of $H-C$ such that \[\sum_{j\in \mathcal J} m_j|G_{j}|=\sum_{k\in \mathcal K} m_k|G_{k}|, \quad \text{where $m_j$, $m_k$ are integers}.\]

In addition, $\mathcal H$ is primitive if and only if the partition $(G_\mathcal J, G_\mathcal K)$ also satisfies the following two conditions:
\begin{enumerate}[i.]
\item no two subsets $\mathcal J'\subsetneq \mathcal J$ and $\mathcal K'\subsetneq \mathcal K$ satisfy $\sum_{j\in \mathcal J'} m'_j|G_{j}|=\sum_{k\in \mathcal K'} m'_k|G_{k}|$ for some integers $m'_j\leq m_j$, $m'_k\leq m_k$ and
\item there are no integers $m'_j, m'_k\in\mathbb Z$ such that the partition $(G_\mathcal J, G_\mathcal K)$ of the connected components of $H-C$ satisfies \mbox{$\sum_{j\in \mathcal J} m'_j|G_{j}|=\sum_{k\in \mathcal K} m'_k|G_{k}|$},  where $m'_j\leq m_j$, $m'_k\leq m_k$ and for at least one integer the inequality is strict. 
\end{enumerate}
\end{prop}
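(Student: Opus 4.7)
The plan is to reduce the proposition to a linear condition on integer weights $m_j$, one per connected component $G_j$ of $H - C$. Write $a_r(e), a_b(e)$ for the red and blue multiplicities of an edge $e$ in $\mathcal H$ and set $\delta(e) := a_r(e) - a_b(e)$, so that the balance condition~\eqref{degreeCondition} becomes $\sum_{e \ni v} \delta(e) = 0$ for every vertex $v$. Each non-core vertex $v$ lies in a unique sunflower edge $S_v$ and a unique matching edge $M_v$, so balance at $v$ forces $\delta(S_v) = -\delta(M_v)$. Any two sunflower edges of the same component $G_j$ are joined by an alternating chain $S_0, M_0, S_1, M_1, \dots$ of edges sharing non-core vertices; iterating the relation along the chain shows that $\delta$ is constant on the sunflower edges of $G_j$ with some value $m_j \in \Z$ and constant equal to $-m_j$ on the matching edges of $G_j$.

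For the first equivalence, every core vertex lies in every sunflower edge and in no matching edge, so balance at such a vertex reads $\sum_j m_j |G_j| = 0$. Splitting the indices by the sign of $m_j$ yields the equation $\sum_{j \in \mathcal J} m_j |G_j| = \sum_{k \in \mathcal K} m_k |G_k|$ with positive integer weights after replacing $m_k$ by $-m_k$ for $k \in \mathcal K$. Conversely, given such a partition and weights, set $a_r(e) = m_j, a_b(e) = 0$ on each sunflower edge of $G_j$ for $j \in \mathcal J$ and reverse for $k \in \mathcal K$, with the opposite conventions on the matching edges; the first paragraph's structure then guarantees balance at all non-core vertices, and the partition equation guarantees balance at all core vertices, producing the desired $\mathcal H$.

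For primitivity, first observe that a primitive $\mathcal H$ satisfies $\min(a_r(e), a_b(e)) = 0$ for every edge $e$, for otherwise removing a single red-blue pair from $e$ yields a proper sub-walk; consequently $m_j \neq 0$ for every component, for otherwise $a_r(e) = a_b(e) \geq 1$ again permits removing a pair. A proper sub-walk then corresponds to a choice $0 \leq a_r'(e) \leq a_r(e),\ 0 \leq a_b'(e) \leq a_b(e)$, not identically $(a_r, a_b)$, whose $\delta'$ is balanced; applying the first paragraph to the sub-walk produces sub-weights $m_j' \in \Z$, and the inequalities on multiplicities force $m_j'$ to share sign with $m_j$ and satisfy $|m_j'| \leq |m_j|$. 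Core balance then yields $\sum_{j \in \mathcal J} m_j' |G_j| = \sum_{k \in \mathcal K} m_k' |G_k|$, which is either supported on a proper subset of $\mathcal J \cup \mathcal K$ (condition~(i) fails) or on all of $\mathcal J \cup \mathcal K$ with at least one strict inequality $m_j' < m_j$ (condition~(ii) fails). Conversely, paragraph~2 applied to $(m_j')$ realizes a proper sub-walk from any failure of (i) or (ii). The main obstacle is the bookkeeping in this step: carefully matching the componentwise multiplicity bounds $a_r' \leq a_r,\ a_b' \leq a_b$ to sign-preserving inequalities on $m_j'$, and handling the (easy but essential) exclusion of $m_j = 0$ in primitive walks.
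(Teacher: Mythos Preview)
Your proof is correct and follows the same route as the paper's: propagate a single integer weight through each connected component of $H-C$ via the degree-$2$ non-core vertices, then read off core balance as the displayed sum. Your signed difference $\delta(e)=a_r(e)-a_b(e)$ is a clean bookkeeping device but not a genuinely different idea, and your primitivity analysis is in fact more detailed than the paper's one-sentence treatment of conditions~(i) and~(ii).
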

\begin{proof}
For the easy direction of the first statement, suppose $(G_\mathcal J, G_\mathcal K)$ is a partition of the connected components of $H-C$ such that $\sum_{j\in \mathcal J} m_j|G_{j}|=\sum_{k\in \mathcal K} m_k|G_{k}|$ for some $m_j,m_k\in\mathbb Z$. Construct the monomial walk $\mathcal W$ as follows: for every edge $e$ having a nonempty intersection with $G_{j}, j\in \mathcal J$ (respectively $G_{k}, k\in\mathcal K$), include in $\mathcal W$, $m_j$ (respectively $m_k$) copies of edge $e$. Color the matching edges red for $\mathcal J$ and blue for $\mathcal K$, and use the opposite colors for the petal edges. This immediately ensures that the non-core vertices satisfy the degree condition~(\ref{degreeCondition}). The degrees of a core vertex $v$ are $\deg_{blue}(v)=\sum_{j\in \mathcal J} m_j|G_{j}|$ and $\deg_{red}=\sum_{k\in \mathcal K} m_k|G_{k}|$ respectively.

For the other direction of the first statement, suppose $\mathcal H$ is a monomial hypergraph over $H$ and consider the monomial walk $\mathcal W$ consisting of the edges of $\mathcal H$. For every vertex $v$ in a component $G_i$ of $H$, there are exactly two distinct edges containing $v$: a petal edge $e$ and a matching edge $f$. Suppose $e$ appears with multiplicity $m_i$ in $\mathcal W$. Without loss of generality, we may assume that all $m_i$ copies of $e$ must belong to the same color partition, say red. By the degree condition (\ref{degreeCondition}), edge $f$ must also appear in $\mathcal W$ with multiplicity $m_i$, and all $m_i$ copies must belong to the blue color partition. Now there exists a vertex $u\in f\cap e'$ ($u\neq v$) where $e'$ ($e'\neq e$) is the unique petal edge containing $u$. To ensure $u$ is balanced, by the degree condition, $e'$ must also appear in $\mathcal W$ with multiplicity $m_i$. Repeating this argument over all vertices in the connected component $G_i$ we conclude that each matching edge in $G_i$ and each petal edge whose non-core vertices fall in $G_i$, must appear in $\mathcal W$ with multiplicity $m_i$. Then we may construct the partition $(G_\mathcal J, G_\mathcal K)$ so that $G_i\in G_{\mathcal J}$ if the matching edges it contains are colored red, and $G_i\in G_{\mathcal K}$ otherwise. Since the core vertices are also balanced, the equation $\sum_{j\in \mathcal J} m_j|G_{j}|=\sum_{k\in \mathcal K} m_k|G_{k}|$ holds, where the sum on the left hand side counts the blue degree of a core vertex and the right hand side its red degree.

The second statement in the proposition (primitivity) follows from the first statement and the definition of primitivity. Condition $i.$\ describes all subwalks not using all edges of the supporting hypergraph $H$ and condition $ii.$\ describes all subwalks using all edges of $H$ with smaller multiplicity then in $\mathcal H$.
\end{proof}

We may now make the following definition.
\begin{defn}
	A \emph{monomial sunflower} is a monomial hypergraph with support set consisting of a sunflower and a perfect matching on the non-core vertices.
\end{defn}

The partition $(G_{\mathcal J},G_{\mathcal K})$ corresponds to a bicoloring of the petals of the sunflower, as illustrated in the following example.

\begin{eg}
Figure~\ref{fig:nonPrimitiveMonomialSunflower} demonstrates that $H_1$ is the support of a simple non-primitive monomial sunflower and its components. Each edge appears exactly once in $\mathcal H_1$, and it is easy to see that for any two components in $H-C$, there is a bicoloring of $\mathcal H_1$ such that the two components form a subwalk.
\begin{figure}[hbt]
	\centering
	 \includegraphics[scale=0.55]{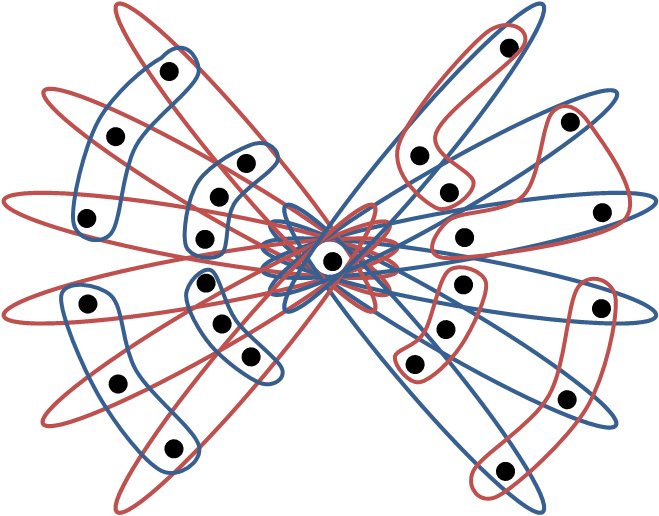}
    \hspace{1cm}
    \includegraphics[scale=0.55]{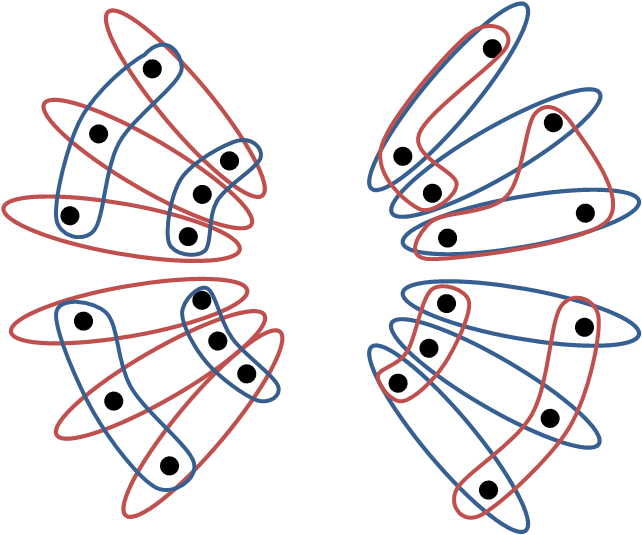}
	\caption{A walk on the non-primitive monomial sunflower $\mathcal H_1$ (left) and the components of $H_1-C$ with appropriate coloring and multiplicity (right).}
    \label{fig:nonPrimitiveMonomialSunflower}
\end{figure}

Figure~\ref{fig:primitiveMonomialSunflower} demonstrates a bicoloring of the monomial sunflower $\mathcal H_2$ that makes it primitive. The appropriate multiplicities for each component are $m_1=1$ and $m_2=2$. 
\begin{figure}[hbt]
	\centering
	 \includegraphics[scale=0.55]{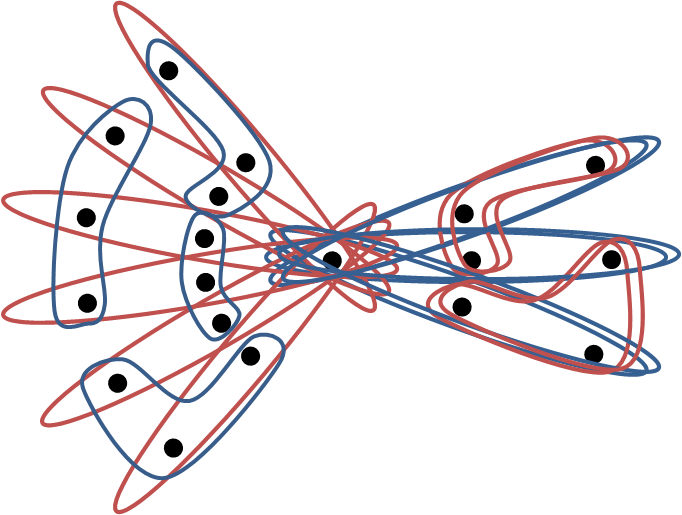}
    \hspace{1cm}
    \includegraphics[scale=0.55]{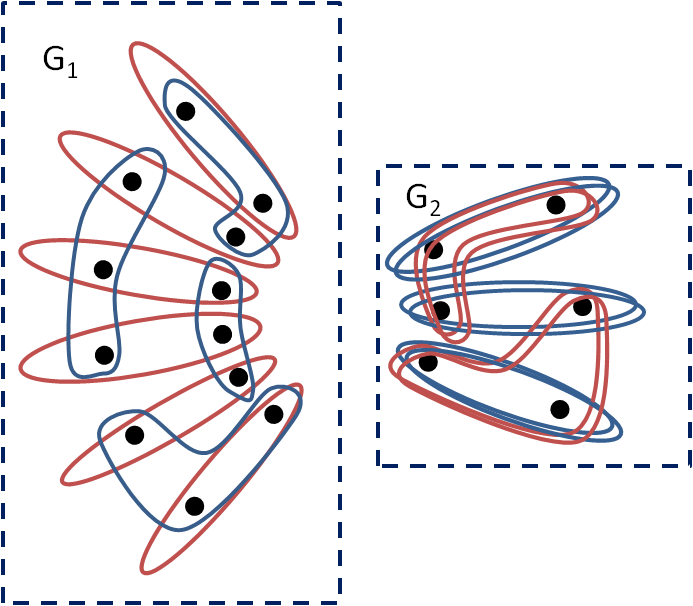}
	\caption{A walk on the primitive monomial sunflower $\mathcal H_2$ (left) and the components of $H_1-C$ with appropriate bicoloring and multiplicity (right).}
    \label{fig:primitiveMonomialSunflower}
\end{figure}
\end{eg}

\subsection{Independent Petal Bouquets}

To produce more general monomial hypergraphs and characterize primitivity on them, we may generalize the monomial sunflower by considering a hypergraph $H$ formed by a collection of sunflowers and a perfect matching on the subset of vertices that are not in the core of any sunflower. Let us, first, assume that the sunflowers are entirely disjoint.

Let $H$ be a hypergraph consisting of a collection of vertex-disjoint sunflowers $S_1, S_2, \ldots, S_{\ell}$ and a perfect matching on the non-core vertices of $\bigcup_i S_i$. We will call such an $H$ a \emph{matched-petal partitioned-core sunflower}. 

As we saw in Proposition~\ref{prop:sunflowers}, when $H$ contains a single sunflower, the components of $H-C$ and their multiplicities determine the degrees of the vertices of $H$ and ensure that the core is balanced. 
In a matched-petal partitioned-core sunflower, not all the components contribute to every sunflower $S_i$. So, one needs to keep track of the components of $H-C$ that contain vertices of $S_i$, for \emph{each} $S_i$. We employ the following notation.

\begin{notation}\label{notation:sunflowersSubset}
Let $H$ be a matched-petal partitioned-core sunflower. Given a fixed partition $(G_\mathcal J, G_\mathcal K)$ of the connected components of $H-C$, and a fixed sunflower $S_i$ of $H$, define $\mathcal{J}(S_i)\subset \mathcal J$ (respectively $\mathcal K(S_i)\subset \mathcal K$) to include every $j\in \mathcal J$ (respectively $j\in \mathcal K$), such that $G_j$ contains a vertex of $S_i$. In symbols, $\mathcal{J}(S_i)$ is the set of indices defined as $\mathcal{J}(S_i):=\{j\in\mathcal{J}(S_i): j\in \mathcal J \text{ and } e\cap G_j\neq \emptyset \text{ for some edge } e\in S_i\}$.
\end{notation}

\begin{prop}[Partitioned Core Sunflower]  \label{prop:partitionedCoreSunflowers}
Let $H$ be a matched-petal partitioned-core sunflower, and adopt notation~\ref{notation:sunflowersSubset}.

The hypergraph $H$ is the support of a monomial hypergraph $\mathcal H$ if and only if there exists a partition $(G_\mathcal J, G_\mathcal K)$ of the connected components of $H-C$ such that for each $1\leq i\leq\ell$:
\[\sum_{j\in \mathcal{J}(S_i)} m_j|G_{j}|=\sum_{k\in \mathcal{K}(S_i)} m_k|G_{k}|, \quad \text{where $m_j$, $m_k$ are integers.}\]

In addition, $\mathcal H$ is primitive if and only if the partition $(G_\mathcal J, G_\mathcal K)$ also satisfies the following two conditions:
\begin{enumerate}[i.]
\item no two subsets $\mathcal J'\subsetneq \mathcal J$ and $\mathcal K'\subsetneq \mathcal K$ satisfy for each $1\leq i\leq \ell$,
\[\sum_{j\in \mathcal{J}(S_i)\cap \mathcal J'} m'_j|G_{j}|=\sum_{k\in \mathcal{K}(S_i)\cap \mathcal K'} m'_k|G_{k}|,\]
for some integers $m'_j\leq m_j$, $m'_k\leq m_k$, and
\item there are no integers $m'_j, m'_k\in\mathbb Z$ such that the partition $(G_\mathcal J, G_\mathcal K)$ of the connected components of $H-C$ satisfies for each $1\leq i\leq \ell$,
    \[\sum_{j\in \mathcal{J}(S_i)} m'_j|G_{j}|=\sum_{k\in \mathcal{K}(S_i)} m'_k|G_{k}|,\]
    where $m'_j\leq m_j$, $m'_k\leq m_k$ and for at least one integer the inequality is strict.
\end{enumerate}
\end{prop}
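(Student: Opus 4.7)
This is a direct generalization of Proposition~\ref{prop:sunflowers} from one sunflower to a vertex-disjoint collection $S_1,\dots,S_\ell$ glued by a single perfect matching on the non-core vertices, so my approach is to mirror the earlier proof, with the essential new ingredient being the bookkeeping of which connected components of $H-C$ contribute petals to which sunflower $S_i$. That bookkeeping is exactly what the index sets $\mathcal{J}(S_i)$ and $\mathcal{K}(S_i)$ record, and so the two balance equations of Proposition~\ref{prop:sunflowers} are merely replaced by the system of $\ell$ equations indexed by $i$.

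For the forward direction of the existence statement, I would take a monomial walk $\mathcal{W}$ witnessing that $\mathcal{H}$ is a monomial hypergraph over $H$, fix a connected component $G_j$ of $H-C$, and rerun the propagation argument from the proof of Proposition~\ref{prop:sunflowers}. The key local fact is that every non-core vertex of $H$ lies in exactly two edges of $H$: one matching edge and one petal (of some sunflower). As one walks inside $G_j$ alternating between matching edges and petals, the degree condition $(\star)$ forces a common multiplicity $m_j$ on all of these edges and forces the matching edges of $G_j$ to share a single colour, with the petals intersecting $G_j$ taking the opposite colour. This holds regardless of which sunflower the individual petals belong to. Sorting each $G_j$ into $G_\mathcal{J}$ or $G_\mathcal{K}$ by the colour of its matching edges yields the desired partition. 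Each core vertex $v\in C\cap S_i$ lies in petals of $S_i$ only, so writing its balance equation in terms of the $m_j$'s gives exactly the $i$-th equation, summed over $\mathcal{J}(S_i)$ and $\mathcal{K}(S_i)$.

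For the converse, given a partition and multiplicities $\{m_j\}$ satisfying all $\ell$ equations, I would build $\mathcal{W}$ by including, for every component $G_j$, exactly $m_j$ copies of each of its matching edges and of each petal whose non-core part sits in $G_j$, coloured with one colour for the matching edges and the opposite colour for the petals, determined by membership of $G_j$ in $G_\mathcal{J}$ or $G_\mathcal{K}$. Non-core balance is automatic, and the $i$-th equation balances the core of $S_i$; Theorem~\ref{thm:balancedEdgesEquivalence} then places $f_{\mathcal{W}}$ in $I_H$. For primitivity I would use the usual contrapositive: a proper subwalk either omits some edges, producing a partition by proper subsets $\mathcal{J}'\subsetneq\mathcal{J}$, $\mathcal{K}'\subsetneq\mathcal{K}$ with multiplicities $m'_j\le m_j$ satisfying the balance equations for every $S_i$ (forbidden by $(i)$), or uses every edge with at least one strictly smaller multiplicity (forbidden by $(ii)$). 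These are exhaustive, and the correspondence established above translates each case into the numerical condition in the statement.

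The main obstacle is verifying that the propagation of the single multiplicity $m_j$ and the single matching-edge colour across a component $G_j$ remains consistent even when $G_j$ meets several sunflowers — that is, when some matching edge of $G_j$ contains non-core vertices belonging to different $S_i$'s. The reason this succeeds is that the propagation rule ``equal multiplicity, opposite colours'' is a local rule derived from $(\star)$ at each non-core vertex, and every such vertex has degree two in $H$ irrespective of which sunflower provides its petal. So the rule travels across $G_j$ unchanged, and the only non-local constraint remaining is the balance of each core $C\cap S_i$, which is exactly what the $\ell$ equations enforce. Once this consistency is cleanly argued, the rest is combinatorial bookkeeping through $\mathcal{J}(S_i)$ and $\mathcal{K}(S_i)$.
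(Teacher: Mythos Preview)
Your proposal is correct and follows essentially the same approach as the paper, which simply says the first statement ``follows with the same argument as Proposition~\ref{prop:sunflowers} for the non-core vertices'' and then observes that the sum $\sum_{j\in \mathcal{J}(S_i)} m_j|G_{j}|$ counts exactly the petals of $S_i$ on one side, giving the core balance equation for each $i$; your plan in fact spells out more detail than the paper does, including the consistency of the propagation across a component meeting several sunflowers.
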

\begin{proof}
The first statement follows with the same argument as Proposition~\ref{prop:sunflowers} for the non-core vertices. For a core vertex, given a partition $(G_\mathcal J, G_\mathcal K)$ of the connected components of $H-C$, the sum $\sum_{j\in \mathcal{J}(S_i)} m_j|G_{j}|$ (respectively $\sum_{k\in \mathcal{K}(S_i)} m_k|G_{k}|$) counts the number of petals of sunflower $S_i$ whose remnants fall in components $G_{\mathcal{J}(S_i)}$ (respectively $G_{\mathcal{K}(S_i)}$), and as such completely determines the quantity that each part of the bipartition contributes to the degree of a core vertex $v\in S_i$. As before, the second statement describes two types of subwalks.
\end{proof}

\begin{eg}
Figure~\ref{fig:mulipleCores} demonstrates a primitive partitioned core monomial sunflower $\mathcal H$. Note that all of  the multiplicities are equal to one.
\begin{figure}[hbt]
	\centering
	\includegraphics[scale=0.55]{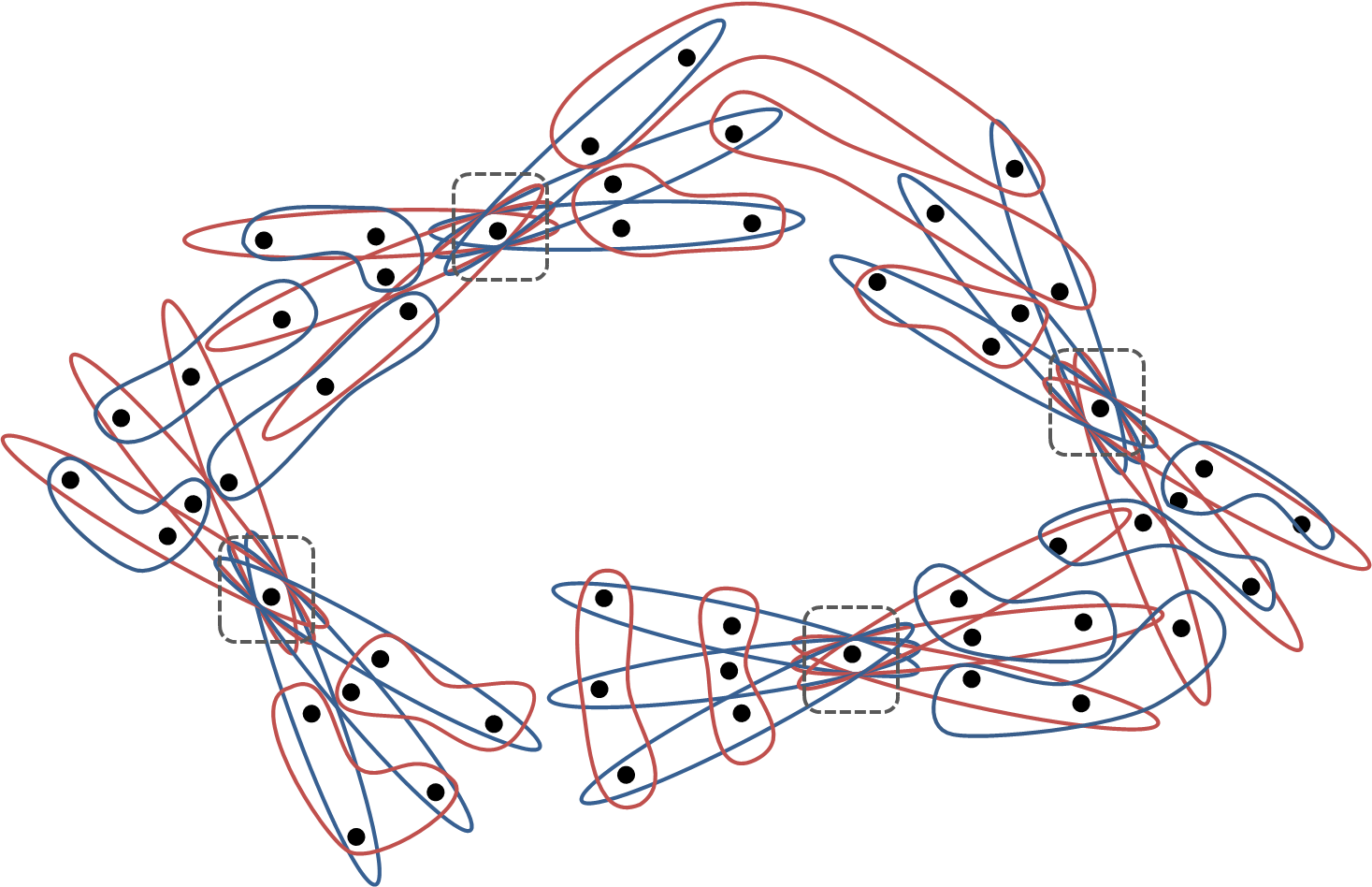}
	\caption{Primitive partitioned-core monomial sunflower $\mathcal H_3$, with an appropriate bicoloring.}
    \label{fig:mulipleCores}
\end{figure}
\end{eg}

We relax the disjointness property of the sunflowers above, and allow the sunflowers to intersect on core vertices.
Let $H$ be a hypergraph consisting of a collection of sunflowers $S_1, S_2, \ldots, S_{\ell}$, which may only intersect at their cores (known as a relaxed core sunflower), and a perfect matching on the non-core vertices of the sunflowers.
We will call such an $H$ a \emph{matched-petal relaxed-core sunflower}.

In the partitioned core case, the degree of each core vertex depends on exactly one sunflower. In contrast, allowing intersections in the cores causes the degree of a core vertex to depend on several sunflowers. This prompts the following notation.

\begin{notation}\label{notation:sunflowersSubset2}
For a matched-petal relaxed-core sunflower, define $\mathcal{I}(v)$ to consist of the set of indices $i$ such that vertex $v$ is in the core of sunflower $S_i$. In symbols $\mathcal{I}(v) := \{i\in\mathcal{I}(v):v\in S_i\}.$
\end{notation}
In addition, notation~\ref{notation:sunflowersSubset} extends to $H$.

\begin{theorem}[Relaxed Core Sunflower]  \label{thm:sunflowersRelaxedCore}
Let $H$ be a matched-petal relaxed-core sunflower, and adopt notation~\ref{notation:sunflowersSubset} and \ref{notation:sunflowersSubset2}.

The hypergraph $H$ is the support of a monomial hypergraph $\mathcal H$ if and only if there exists a partition $(G_\mathcal J, G_\mathcal K)$ of the connected components of $H-C$ such that for each vertex $v$ in the core of a sunflower:
\[\sum_{i\in \mathcal{I}(v)}\sum_{j\in \mathcal{J}(S_i)} m_j|G_{j}|=\sum_{i\in \mathcal{I}(v)}\sum_{k\in \mathcal{K}(S_i)} m_k|G_{k}|, \quad \text{where $m_j$, $m_k$ are integers.}\]

In addition, $\mathcal H$ is primitive if and only if the partition $(G_\mathcal J, G_\mathcal K)$ also satisfies the following two conditions:
\begin{enumerate}[i.]
\item no two subsets $\mathcal J'\subsetneq \mathcal J$ and $\mathcal K'\subsetneq \mathcal K$ satisfy for each vertex $v$ in the core of the sunflower,
\[\sum_{i\in \mathcal{I}(v)}\sum_{j\in \mathcal{J}(S_i)\cap \mathcal J'} m'_j|G_{j}|=\sum_{s\in \mathcal{I}(v)}\sum_{k\in \mathcal{K}(S_i)\cap \mathcal K'} m'_k|G_{k}|,\]
for some integers $m'_j\leq m_j$, $m'_k\leq m_k$, and
\item there are no integers $m'_j\leq m_j$, $m'_k\leq m_k\in\mathbb Z$, where the inequality is strict for at least one integer, such that the partition $(G_\mathcal J, G_\mathcal K)$ of the connected components of $H-C$ satisfies for each vertex $v$ in the core of a sunflower,
    \[\sum_{i\in \mathcal{I}(v)}\sum_{j\in \mathcal{J}(S_i)} m'_j|G_{j}|=\sum_{i\in \mathcal{I}(v)}\sum_{k\in \mathcal{K}(S_i)} m'_k|G_{k}|.\]
\end{enumerate}
\end{theorem}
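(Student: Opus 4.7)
The plan is to follow the strategy of the proofs of Propositions~\ref{prop:sunflowers} and~\ref{prop:partitionedCoreSunflowers}; the rigidity argument for non-core vertices is unchanged, and the only new ingredient is that a core vertex can now receive contributions from petals of several sunflowers simultaneously. The first observation I would record is structural: since the sunflowers meet only in cores, every non-core vertex of $H$ belongs to a unique sunflower, and hence each connected component $G_j$ of $H-C$ lies in the non-core vertices of a unique sunflower. In particular, every petal $e$ counted in $|G_j|$ (i.e., with $e\cap C\neq\emptyset$ and $e\setminus C\in G_j$) is a petal of that one sunflower, so the inner sum $\sum_{j\in\mathcal{J}(S_i)}m_j|G_j|$ really does count petals of $S_i$ only, and Notation~\ref{notation:sunflowersSubset} extends meaningfully.

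For the characterization statement, the forward direction assumes $\mathcal{H}$ is a monomial hypergraph supported on $H$ with a compatible bicoloring. At any non-core vertex $v$, exactly one petal $e$ and one matching edge $f$ contain $v$, so condition~(\ref{degreeCondition}) forces $e$ and $f$ to appear with equal multiplicity and opposite colors. Propagating along $G_j$ through alternating petal and matching edges produces a common multiplicity $m_j$ on every edge supported in $G_j$ together with a uniform color on its matching edges; let $\mathcal{J}$ (resp.\ $\mathcal{K}$) collect the components whose matching edges are red (resp.\ blue). At a core vertex $v$, every petal through $v$ is a petal of some $S_i$ with $i\in\mathcal{I}(v)$, and such a petal is blue precisely when its non-core part lies in $G_j$ for some $j\in\mathcal{J}(S_i)$; counting with multiplicity yields the claimed identity. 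Conversely, given a partition and integers satisfying the identity, I would build $\mathcal{W}$ by coloring matching edges in $G_\ell$ red/blue according to $\ell\in\mathcal{J}/\mathcal{K}$, giving each petal the color opposite to that of the matching edges of its component, and taking all edges with multiplicity $m_\ell$; non-core balance is immediate, and core balance at $v$ is exactly the hypothesis summed over $i\in\mathcal{I}(v)$.

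For the primitivity statement, any proper subwalk $\mathcal{W}'$ of $\mathcal{W}$ is again forced by the non-core rigidity to use lowered per-component multiplicities $m'_\ell\le m_\ell$ with the same local coloring pattern, and must still satisfy the balance identity at every core vertex. The two failure modes are precisely those enumerated: condition (i) rules out subwalks whose support drops whole components (the proper subsets $\mathcal{J}'\subsetneq\mathcal{J}$ and $\mathcal{K}'\subsetneq\mathcal{K}$), while condition (ii) rules out subwalks that retain the full support but strictly shrink some multiplicity. Together they match the definition of a primitive monomial walk.

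The main obstacle, as in the partitioned-core case, is bookkeeping rather than any deep new argument: one must keep straight that the inner sum $\sum_{j\in\mathcal{J}(S_i)}m_j|G_j|$ indexes only those components and petals arising from $S_i$, while the outer sum over $i\in\mathcal{I}(v)$ then aggregates contributions from the distinct sunflowers whose cores contain $v$. The hypothesis that sunflowers may intersect only in their cores is what keeps these sums disjoint: without it, a single petal could be counted inside $\mathcal{J}(S_i)$ for more than one $i$, and the clean per-sunflower additivity underlying the identity would collapse. I would therefore highlight this disjointness at the outset of the proof and invoke it each time a petal is attributed to a single sunflower during the degree count.
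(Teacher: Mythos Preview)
Your proposal is correct and takes essentially the same approach as the paper: the paper's proof simply says to follow Proposition~\ref{prop:partitionedCoreSunflowers}, noting that the degree of a core vertex $v$ may now depend on several sunflowers, so the relevant count is the double sum $\sum_{i\in\mathcal{I}(v)}\sum_{j\in\mathcal{J}(S_i)} m_j|G_j|$.

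One small caution about your opening structural remark: the implication ``every non-core vertex lies in a unique sunflower, hence each component $G_j$ of $H-C$ lies in the non-core vertices of a unique sunflower'' is not quite right, because a matching edge may join non-core vertices of distinct sunflowers and thereby merge their petal remnants into one component. This does not damage your argument---the rigidity propagation and the per-petal attribution to a unique sunflower still hold---but you should not rely on components being confined to one sunflower when justifying the bookkeeping. The paper does not make (or need) that claim either.
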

\begin{proof}
Follow the proof of Proposition~\ref{prop:partitionedCoreSunflowers} with one important exception. The degree of a core vertex may now depend on several sunflowers, and so the quantity necessary to calculate the degree of a core vertex $v$ in part $\mathcal J$, for example, is $\sum_{i\in \mathcal{I}(v)}\sum_{j\in \mathcal{J}(S_i)} m'_j|G_{j}|$.
\end{proof}

Proposition~\ref{prop:partitionedCoreSunflowers} can be obtained as a corollary of Theorem~\ref{thm:sunflowersRelaxedCore}.

\begin{eg}
Figure~\ref{fig:RelaxedCore} demonstrates a simple primitive relaxed core monomial sunflower $\mathcal H_4$ with four sunflowers. Each multiplicity equals one.
\begin{figure}[hbt]
	\centering
	\includegraphics[scale=0.55]{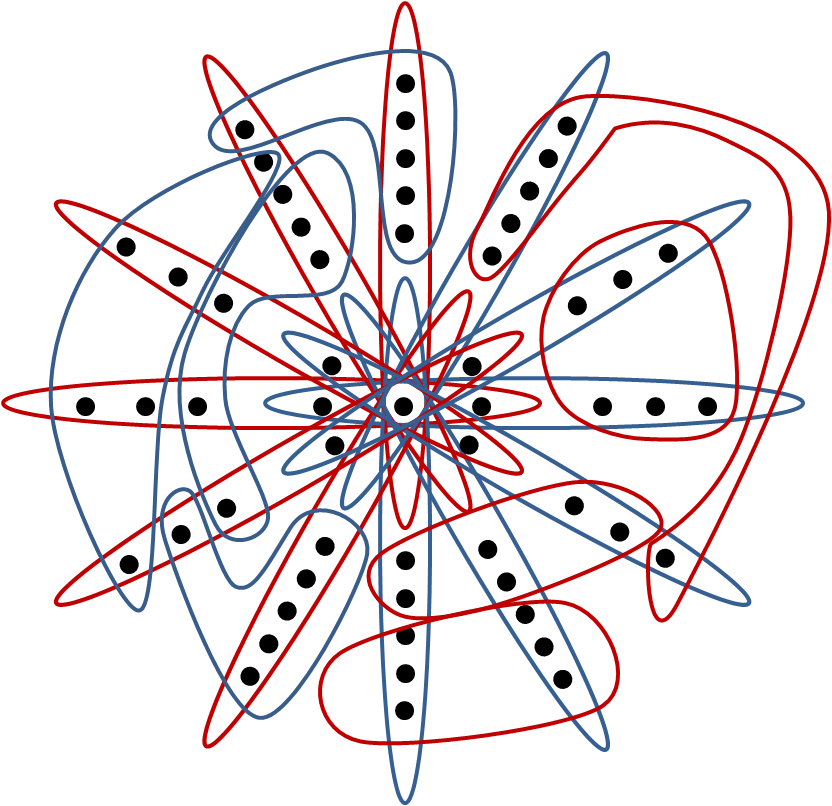}
	\caption{Primitive relaxed core monomial sunflower $\mathcal H_4$, with an appropriate bicoloring.}
    \label{fig:RelaxedCore}
\end{figure}
\end{eg}

%%%%%%%%%%%%%%%%%%%%%%%%%%%%%%%%%%%%%%%%%%%%%%%%%%%%%%

\section{Bouquet Complexity}
\label{sec:ctrexamples}

The types of Graver elements studied in Propositions~\ref{prop:matchings}, \ref{prop:sunflowers} and~\ref{prop:partitionedCoreSunflowers} are, in fact, the \emph{simplest} ones that may occur beyond the graphs case.  They are the   natural direct generalization of the three types of monomial walks on graphs from  Theorem~\ref{thm:OH:characterization} (see also Figure~\ref{fig:3walksOnGraphs}), each of which is an example of a sparse bouquet. 
By allowing the edges to contain more than two vertices, we inadvertently and significantly increase the complexity of the primitive walks. This already occurs on $3$-uniform hypergraphs and on sparse bouquets.  We summarize some of the different ways in which the complexity has increased through the following examples.

In the case of graphs, it is known  (see \cite[Proposition~8.1.6]{Vill}) that the largest degree of a vertex in the support of a primitive walk is $4$, and that the edges in the support of the walk can be traversed at most twice.  
None of these restrictions extend to $d$-uniform hypergraphs ($d>2$), where a primitive walk can contain vertices with arbitrarily large degree; can contain arbitrarily many vertices of large degree; and a supporting edge can be used arbitrarily many times in a primitive walk.

\begin{figure}[hbt]
	\centering
	\includegraphics[scale=0.5]{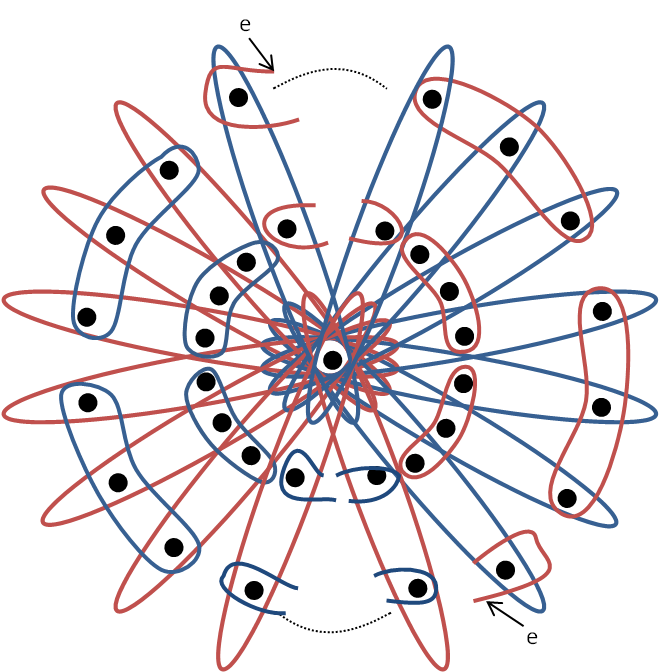}
	\caption{A primitive monomial hypergraph containing a vertex with arbitrarily large degree.}
	\label{fig:largeDegreeVertex}
\end{figure}

It is not hard to see that there exists exactly one primitive $d$-uniform monomial sunflower with core of size $d-1$: the one containing exactly $2d+2$ hyperedges. 
On the other hand, figure~\ref{fig:largeDegreeVertex} demonstrates a family of primitive 3-uniform bouquets on $n$ vertices with number of edges of the order of $n$ and containing a vertex with degree of the order of $n$.
\begin{figure}[hbt]
	\centering
	 \includegraphics[scale=0.5]{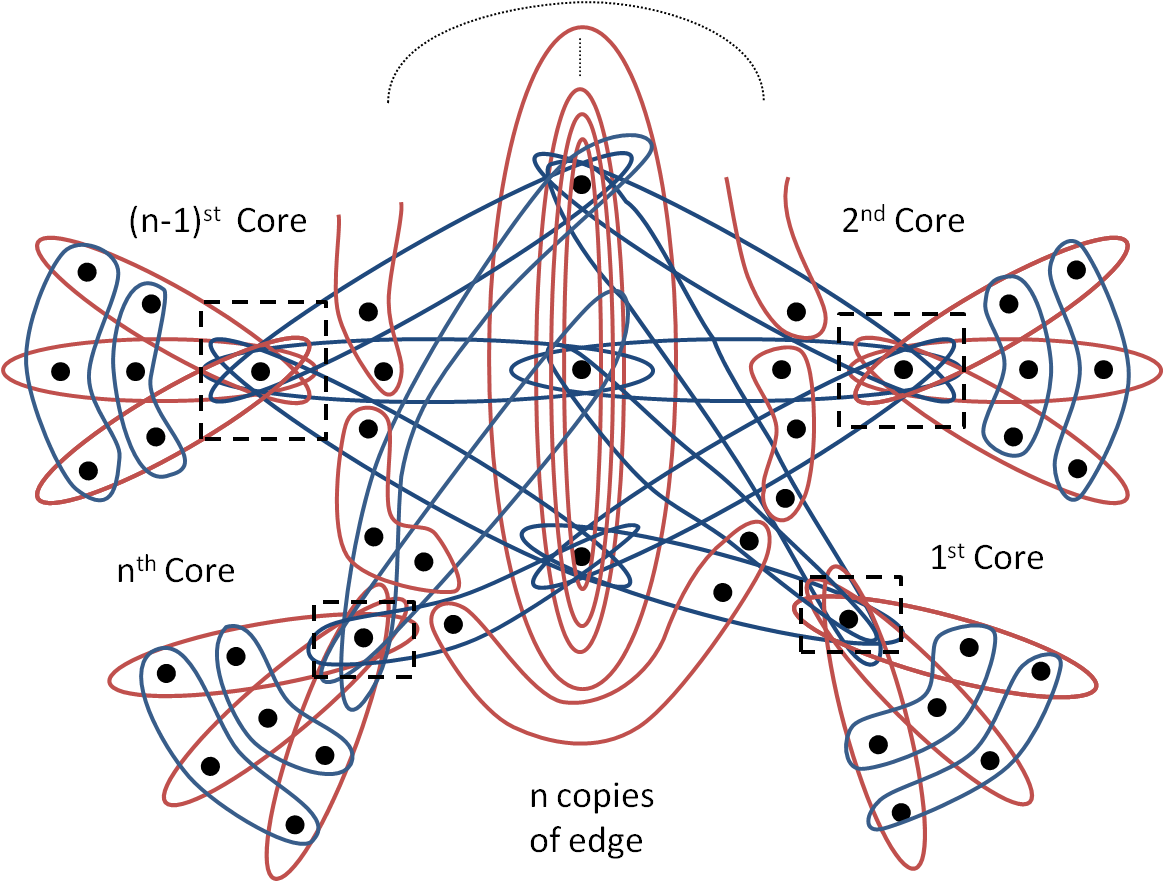}
	\caption{A primitive monomial hypergraph with arbitrarily many cores.}
	\label{fig:nthDegreeEdge}
\end{figure}
Figure~\ref{fig:nthDegreeEdge} demonstrates a family of $3$-uniform bouquets 
in which the number of cores, vertex degrees, and edge multiplicities can be arbitrarily high.

%%%%%%%%%%%%%%%%%%%%%%%%%%%%%%%%%%%%%%%%%%%%%%%%%%%%%%
%%%%%%%%%%%%%%%%%%%%%%%%%%%%%%%%%%%%%%%%%%%%%%%%%%%%%%
\section{Conclusion and open problems}\label{sec:problems}

Using the sparse hypergraphs as  a starting point, one should consider the problem of classifying supports of more general monomial hypergraphs. However, this problem is highly nontrivial. For example, Figure~\ref{fig:simpleNonPrimitiveBouquet} represents a simplest example of a non primitive monomial walk that is not of the form covered in Section~\ref{sec:sparseMonomialHypergraphs}. The hypergraph consists of two overlapped walks.
\begin{figure}[hbt]
    	\centering
    	 \includegraphics[scale=0.55]{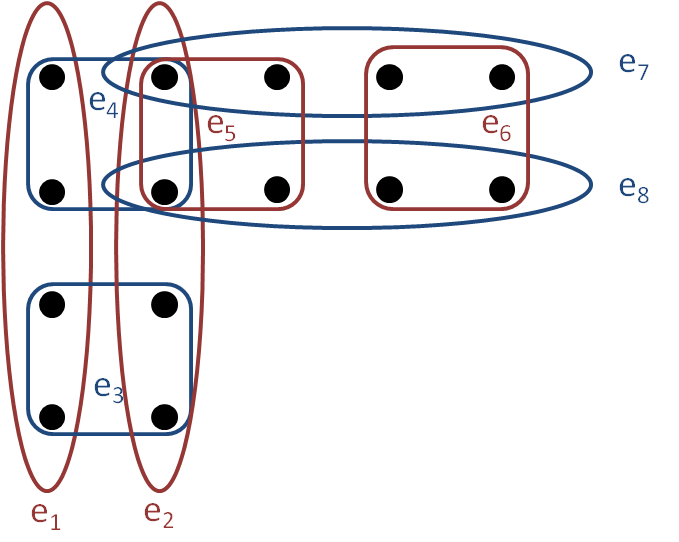}
    	\caption{A non-primitive monomial bouquet}
        \label{fig:simpleNonPrimitiveBouquet}
    \end{figure}
It is tempting to mimic the constructions for sparse bouquets and express this hypergraph as a union of, say, four sunflowers $S_1,\dots,S_4$ with two petals each:
$S_1=\{e_1,e_3\}$, $S_2=\{e_2,e_4\}$, $S_3=\{e_5,e_7\}$, $S_4=\{e_6,e_8\}$. Then, removing the cores from these sunflowers and counting the petal remnants in the connected components, with multiplicity $1$, as suggested by a direct generalization of Theorem~\ref{thm:sunflowersRelaxedCore}, would reveal that $S_3\cup S_4$  is a balanced set of edges. In this way,  one could detect that the monomial hypergraph $S_1\cup\dots\cup S_4$ is not primitive.

In general, however,  an arbitrary monomial hypergraph consists of sunflowers that overlap in unrestricted ways, and thus it is not guaranteed that removing the cores will disconnect and detect a subwalk.  In particular, a direct generalization of the results from Section~\ref{sec:sparseMonomialHypergraphs} does not hold without further assumptions.
\begin{problem}
	Find a criterion to detect primitivity of a given monomial hypergraph $H$ based on a subset of the edges in $H$.
\end{problem}
At the moment, the best result in this direction is Theorem~\ref{thm:balancedEdgesEquivalence}. For ``sparse" collections of sunflowers and matchings, Section~\ref{sec:sparseMonomialHypergraphs} offers more efficient criteria. But if  the hypergraph is more dense, an arbitrary sunflower decomposition alone does not seem to suffice.

More generally, it would be interesting, although difficult, to solve the following specific instance of the general problem of characterizing primitive walks.
\begin{problem}\label{bigProblem}
	Characterize (combinatorially) the supports of primitive monomial hypergraphs over an arbitrary $3$-uniform hypergraph.
\end{problem}
We remind the reader that  even for $3$-uniform hypergraphs, any of the types of bouquets that we describe can  appear in the Graver basis, as demonstrated throughout the paper. Thus Problem~\ref{bigProblem} is highly non-trivial; however, we expect it to have a nice answer for a subclass of $3$-uniform hypergraphs, for example, where it is possible to partition a vertex set in a way that restricts the types of bouquets that may appear. 
On the other hand, recall the ultimate goal of interest to statistical applications: we are looking for characterizations of the structure of monomial walks for a family of hypergraphs specified by conditions that are significantly more restrictive than uniformity. The family will come from a statistical model with a prescribed edge size, or edge types. Such conditions will necessarily restrict the ways in which edges of $H$ can intersect, and thus may imply the sparsity of walks supported on $H$.

Furthermore, one can quickly derive the entire Graver basis for the following two classes of hypergraphs. For \emph{any} $2$-regular hypergraph, it consists only of pairs of matchings as in Proposition~\ref{prop:matchings}. In particular, the Graver basis of a connected 2-regular hypergraph contains exactly one element. Secondly, consider the $d$-uniform hypergraph $C_r^{d}=(V,E)$, produced by letting $V=\bigsqcup_{1 \leq i\leq r} V_i$, with each $V_i$ having size $d/2$ ($d$ even), and edge set $E=\{V_i\cup V_j: \text{ for }1 \leq i < j\leq r\}$. A (primitive) monomial walk on $C_r^{d}$ corresponds to a (primitive) even closed walk in the complete graph $K_r$, as each hyperedge in $C_r^{d}$ is in a one-to-one correspondence with an edge in $K_r$.

\medskip

Our next problem concerns generating primitive monomial hypergraphs. It is possible to generate such hypergraphs by contraction or ``gluing" operations using known monomial hypergraphs (and sparse bouquets in particular) as building blocks. For example, one may produce $d$-uniform monomial sunflowers from a $d$-uniform pair of perfect matchings by identifying selected sets of independent vertices. This operation preserves primitivity. The converse is only true if vertex-identification takes place within connected components. In general, one may also glue two hypergraphs $H_1$, $H_2$ along the vertices of an edge to produce a new hypergraph $H$, introducing new primitive elements in $I_H$ constructed from binomials from $I_{H_1}$ and $I_{H_2}$. The support of the monomial hypergraph in Figure~\ref{fig:primitiveBouquetGluing} was partially constructed from the support of four monomial sunflowers with this gluing operation. It is not hard to see that $\mathcal B$ is a monomial hypergraph.

\begin{figure}[hbt]
	\centering
	\includegraphics[scale=0.4]{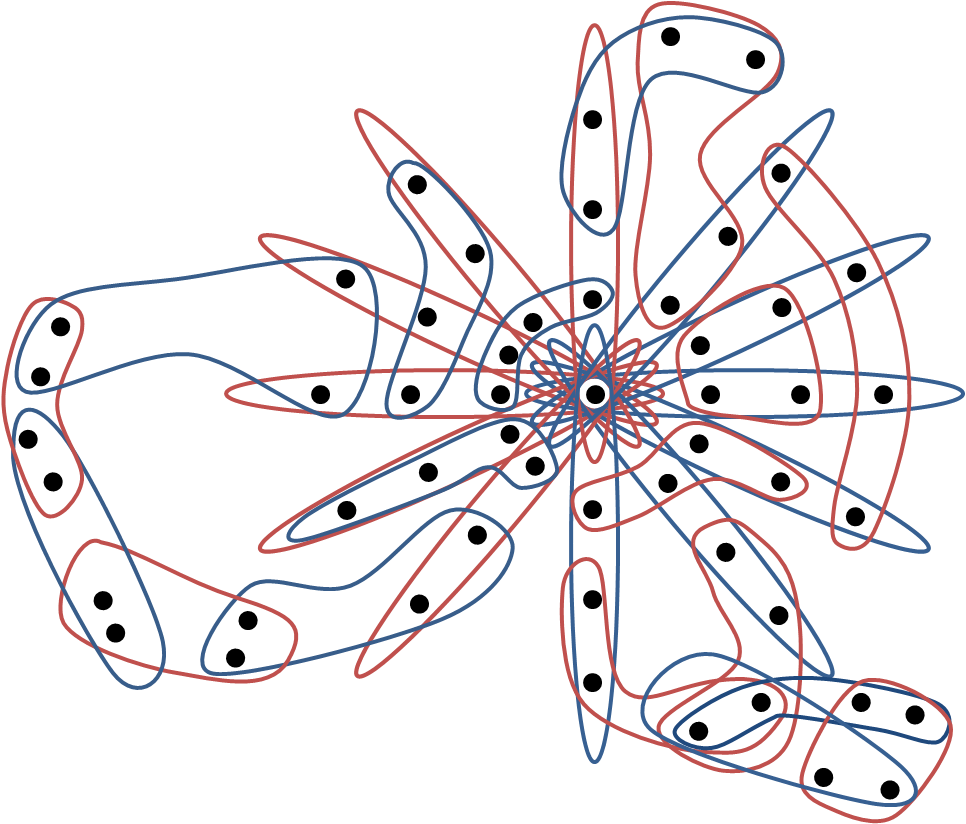}
	\caption{A primitive bouquet $\B$ partially produced by gluing four sparse bouquets.}
	\label{fig:primitiveBouquetGluing}
\end{figure}

A general gluing operation for graphs, based on a toric fiber product, is described in~\cite{EKS}. It preserves many properties of toric ideals.

\begin{problem}
Find a gluing operation on hypergraphs that preserves primitivity of monomial walks. Is it possible to generate all primitive monomial hypergraphs starting from sparse bouquets?
\end{problem}

\medskip

The primitive walks in hypergraphs are clearly much more general than in the case of graphs, as Section~\ref{sec:ctrexamples} shows.
Nevertheless,  we expect many of the other properties of the coordinate ring of $I_H$ to have combinatorial interpretations. For example, the first author studies the degree bounds on the minimal generators of $I_H$ in \cite{GrPe}, generalizing the theorems known for graphs.

Finally, the problems of finding nice term orders and reduced Gr\"obner bases of $I_H$, characterizing Cohen-Maculayness and normality of the coordinate ring and the corresponding polytope, and relating known coloring-inspired properties of hypergraphs to various invariants of the toric ideal $I_H$ are wide open.

%%%%%%%%%%%%%%%%%%%%%%%%%%%%%%%%%%%%%%%%%%%%%%%%%%%%%%%%%%%%%%%%%%%%%%%%%%%%%%%%%%%%
%%%%%%%%%%%%%%%%%%%%%%%%%%%%%%%%%%%%%%%%%%%%%%%%%%%%%%%%%%%%%%%%%%%%%%%%%%%%%%%%%%%%

%%%%%%%%%%%%%%%%%%%%%%%%%%%%
%%%%%%%%%%%%%%%%%%%%%%%%%%%%
%%%%%%%%%%%%%%%%%%%%%%%%%%%%
%%%%%%%%%%%%%%%%%%%%%%%%%%%%
%%%%%%%%%%%%%%%%%%%%%%%%%%%%
%%%%%%%%%%%%%%%%%%%%%%%%%%%%

\section*{Acknowledgements}    The authors thank Gy\"orgy Tur\'an for inspiring discussions, at the start of this project, about the fundamental problem of walks in hypergraphs; and also Elizabeth Gross for suggesting the name ``bouquet", and proofreading an early version of the manuscript. The second author is grateful to Amitava Bhattacharya for introducing her to combinatorial discrepancy.
Finally, we are grateful to the anonymous referee for thoughtful and thorough comments that helped improve this work.

The authors acknowledge support by grant FA9550-12-1-0392 from the U.S. Air Force Office of Scientific Research (AFOSR) and the Defense Advanced Research Projects Agency (DARPA). 
The AMS Simons Travel Grant has helped the first author tremendously during the first year of the project. 

\bibliography{toric_algebra_hypergraphs}

\providecommand{\bysame}{\leavevmode\hbox to3em{\hrulefill}\thinspace}
\providecommand{\MR}{\relax\ifhmode\unskip\space\fi MR }
% \MRhref is called by the amsart/book/proc definition of \MR.
\providecommand{\MRhref}[2]{%
  \href{http://www.ams.org/mathscinet-getitem?mr=#1}{#2}
}
\providecommand{\href}[2]{#2}
\begin{thebibliography}{OHT13}

\bibitem[AHT12]{AHT2012}
Satoshi Aoki, Hisayuki Hara, and Akimichi Takemura, \emph{Markov bases in
  algebraic statistics}, Springer Series in Statistics, Springer New York,
  2012.

\bibitem[Cha02]{Chazelle2002}
Bernard Chazelle, \emph{The discrepancy method: Randomness and complexity},
  Cambridge University Press, 2002.

\bibitem[CN09]{CorsoNagel09}
Alberto Corso and Uwe Nagel, \emph{Monomial and toric ideals associated to
  {F}errers graphs}, Trans. Amer. Math. Soc. \textbf{361} (2009),
  no.~1371--1395.

\bibitem[CNN11]{CharikarNeNi2011}
Moses Charikar, Alantha Newman, and Aleksandar Nikolov, \emph{Tight hardness
  results for minimizing discrepancy}, Proceedings of the Twenty-Second Annual
  {ACM-SIAM} Symposium on Discrete Algorithms, {SODA '11}, 2011,
  pp.~1607--1614.

\bibitem[DSS09]{algStatBook}
Mathias Drton, Bernd Sturmfels, and Seth Sullivant, \emph{Lectures on algebraic
  statistics}, Oberwolfach Seminars, vol.~39, Birkh{\"a}user, 2009.

\bibitem[DV11]{Vill-clutters}
Luis~A. Dupont and Rafael~H. Villarreal, \emph{Algebraic and combinatorial
  properties of ideals and algebras of uniform clutters of {TDI} systems},
  Journal of Combinatorial Optimization \textbf{21} (2011), no.~3, 269--292.

\bibitem[EKS]{EKS}
Alexander Engstrom, Thomas Kahle, and Seth Sullivant, \emph{Multigraded
  commutative algebra of graph decompositions}, arXiv:1102.2601.

\bibitem[GP]{GrPe}
Elizabeth Gross and Sonja Petrovi{\'c}, \emph{Combinatorial degree bound for
  toric ideals of hypergraphs}, arXiv:1206.2512.

\bibitem[HT08]{HvT08}
Huy~T{\`a}i H{\`a} and Adam Tuyl, \emph{Monomial ideals, edge ideals of
  hypergraphs, and their graded {B}etti numbers}, Journal of Algebraic
  Combinatorics: An International Journal \textbf{27} (2008), no.~2, 215--245.

\bibitem[Juk01]{Jukna}
Stasys Jukna, \emph{Extremal combinatorics - with applications in computer
  science}, Texts in theoretical computer science, Springer, 2001.

\bibitem[Mat10]{Matousek2010}
Jiri Matou{\v{s}}ek, \emph{Geometric discrepancy: An illustrated guide},
  Springer Verlag, 2010.

\bibitem[Mor13]{Morton10}
Jason Morton, \emph{Relations among conditional probabilities}, Journal of
  Symbolic Computation \textbf{50} (2013), 478--492.

\bibitem[OH99]{OH-quad}
Hidefumi Ohsugi and Takayuki Hibi, \emph{Toric ideals generated by quadratic
  binomials}, Journal of Algebra \textbf{218} (1999), no.~2, 509--527.

\bibitem[OH00]{OH-multipartite}
\bysame, \emph{Compressed polytopes, initial ideals and complete multipartite
  graphs}, Illinois J of Mathematics \textbf{44} (2000), no.~2, 391--406.

\bibitem[OHT13]{OHT:11}
Mitsunori Ogawa, Hisayuki Hara, and Akimichi Takemura, \emph{Graver basis for
  an undirected graph and its application to testing the beta model of random
  graphs}, Annals of Institute of Statistical Mathematics. \textbf{65} (2013),
  191--212.

\bibitem[PRF10]{PRF:09}
Sonja Petrovi{\'c}, Alessandro Rinaldo, and Stephen~E. Fienberg,
  \emph{Algebraic statistics for a directed random graph model with
  reciprocation}, Algebraic Methods in Statistics and Probability II (M.~Viana
  and H.~Wynn, eds.), Contemporary Mathematics, vol. 516, American Mathematical
  Society, Providence RI, 2010, pp.~261--283.

\bibitem[RPF]{RPF:11}
Alessandro Rinaldo, Sonja Petrovi{\'c}, and Stephen~E. Fienberg, \emph{Maximum
  likelihood estimation in the $\beta$ model}, Annals of Statistics. To appear.
  Preprint available at arXiv:1105.6145.

\bibitem[RTT12]{RTT12}
Enrique Reyes, Christos Tatakis, and Apostolos Thoma, \emph{Minimal generators
  of toric ideals of graphs}, Advances in Applied Mathematics \textbf{48}
  (2012), no.~1, 64--78.

\bibitem[Stu96]{St}
Bernd Sturmfels, \emph{Gr{\"{o}}bner bases and convex polytopes}, University
  Lecture Series, no. 8, American Mathematical Society, 1996.

\bibitem[SVV94]{SVV94}
Aron Simis, Wolmer~V. Vasconcelos, and Rafael~H. Villarreal, \emph{On the ideal
  theory of graphs}, Journal of Algebra \textbf{167} (1994), no.~2, 389--416.

\bibitem[SW12]{StWe12}
Bernd Sturmfels and Volkmar Welker, \emph{Commutative algebra of statistical
  ranking}, Journal of Algebra \textbf{361} (2012), 264--286.

\bibitem[Vil95]{Vill95}
Rafael~H. Villarreal, \emph{Rees algebras of edge ideals}, Communications in
  Algebra \textbf{23} (1995), no.~9, 3513--3524.

\bibitem[Vil00]{Vill}
Rafael~H. Villarreal, \emph{Monomial algebras}, CRC Press, 2000.

\end{thebibliography}
\bibliographystyle{amsalpha}

\end{document}